\documentclass[a4paper,12pt,twoside]{article}
\usepackage{mathrsfs,amsmath,amsfonts,amssymb,amsthm,color}
\usepackage[pdftex]{graphicx,xcolor} 
\usepackage{fancybox}
\usepackage{comment,url,esint}
\usepackage{tikz}
\usepackage{here}
\usepackage{autobreak}
\usepackage{breqn}
\usepackage{listings,jvlisting} 

\numberwithin{equation}{section}
     \newtheorem{thm}{Theorem}[section]
     
     \newtheorem{prop}[thm]{Proposition}
     
\theoremstyle{definition}
     \newtheorem{defn}[thm]{Definition}
     
\theoremstyle{remark}
     \newtheorem{rem}[thm]{Remark}

\newcommand{\ve}{\varepsilon}

\newcommand{\emax}{\operatorname{emax}}
\newcommand{\einf}{\operatorname{einf}}

\newcommand{\sizetplus}{9{,}573\ }
\newcommand{\sizetminus}{17{,}261\ }

\newcommand{\paramincsize}{{\tt INCSIZE}} 
\newcommand{\paramnshift}{{\tt NSHIFT}}
\newcommand{\vark}{{\tt K}}
\newcommand{\varl}{{\tt L}}
\newcommand{\variniemax}{{\tt iniemax}}
\newcommand{\varinieinf}{{\tt inieinf}}
\newcommand{\varemax}{{\tt emax}}
\newcommand{\vareinf}{{\tt einf}}
\newcommand{\arraysmallc}{{\tt c}}
\newcommand{\arraylargec}{{\tt C}}
\newcommand{\arraysmallinc}{{\tt inc}}
\newcommand{\arraylargeinc}{{\tt INC}}
\newcommand{\subroutinegauss}{{\tt GAUSS}}
\newcommand{\subroutineexplore}{{\tt EXPLORE}}
\newcommand{\subroutinecoarsegraining}{{\tt COARSE\_GRAINING}}
\begin{document}

\title
{Numerical experiments on the Hardy conjecture for the Gauss circle problem}
\author{Satoshi Yamaguchi, Shoichi Fujima, Shigehiko Kuratsubo, \\ Eiichi Nakai and Tsuyoshi Yoneda}
\date{}

%
%
%
%



\maketitle

\begin{abstract}
The classical unsolved Gauss circle problem concerns estimating the error 
between the number of lattice points inside a circle and the area of the circle 
as its radius tends to infinity. 
About a century ago, 
Hardy proposed a conjecture concerning this problem. 
In this paper, 
we attempt to provide numerical evidence in support of the Hardy conjecture 
through large-scale numerical computations.
\\[1ex]
2020 {\it Mathematics Subject Classification.} Primary 11Y60, 11-04. 
\\[0.5ex]
{\it Key words and phrases.} Gauss circle problem, Hardy conjecture, large-scale numerical computations.
\end{abstract}

\section{Introduction} 

The Gauss circle problem is a classical problem in number theory. 
In this paper, we investigate this problem through numerical experiments.

A point $(x,y)\in\mathbb{R}^2$ is called a lattice point if both $x$ and $y$ are integers. 
For $t>0$, let $A(t)$ denote the number of lattice points inside the circle centered at the origin with radius $r=\sqrt{t}$; 
that is, the number of integer solutions of
\begin{equation*}
 x^2+y^2\le t, 
\end{equation*}
see Figure~\ref{fig:lattice1}.
The number $A(t)$ is equal to the area of the polygon shown in Figure~\ref{fig:lattice2}, 
since the polygon is the union of the unit squares centered at the lattice points inside the circle. 
Thus, $A(t)$ is approximated by the area $\pi t$ of the circle.
\begin{figure}[htbp]
\begin{minipage}{.45\linewidth}
\begin{center}
\includegraphics[width=.80\linewidth]{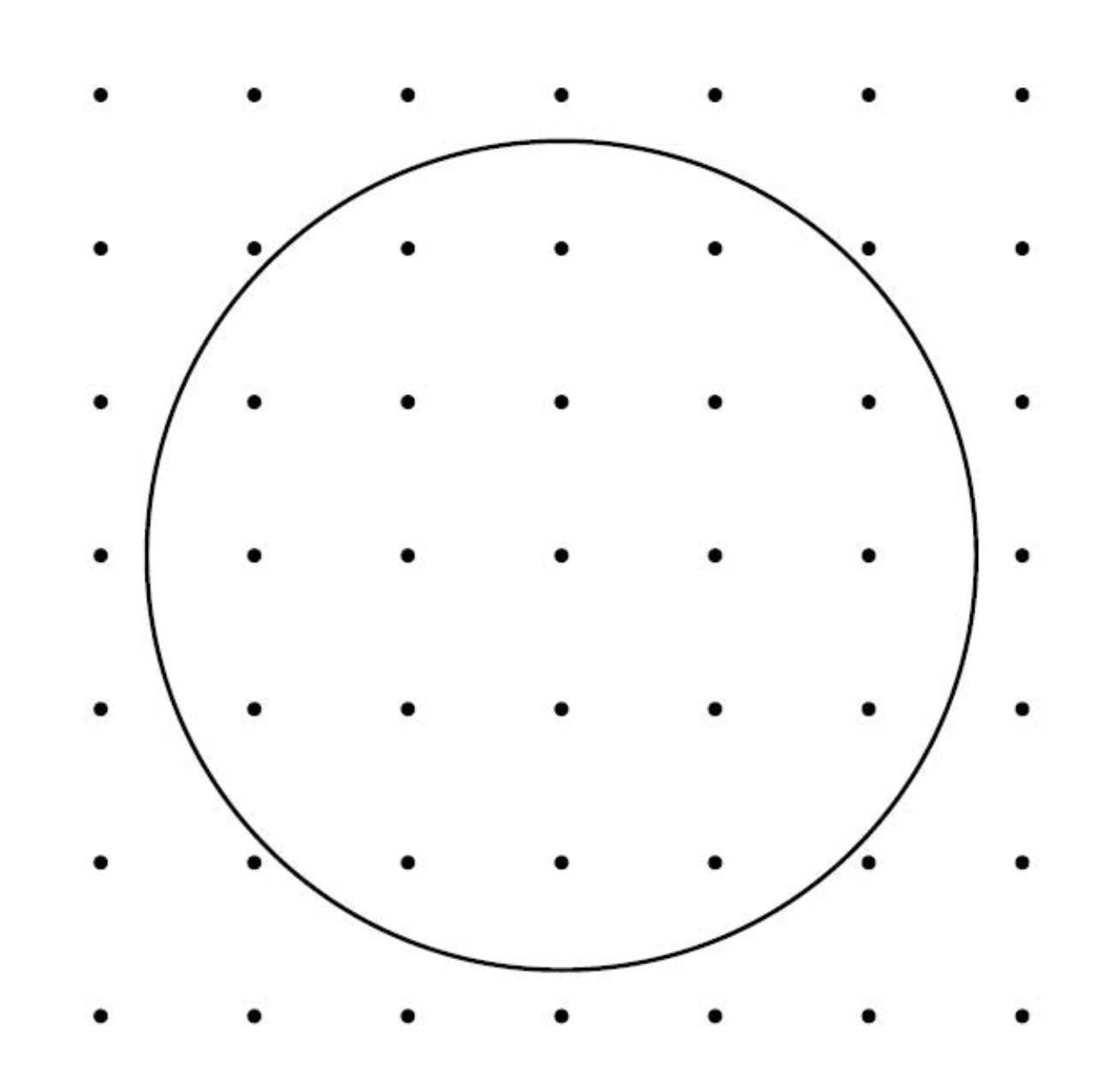} 
\caption{The Gauss circle problem}
\label{fig:lattice1}
\end{center}
\end{minipage}
\begin{minipage}{.45\linewidth}
\begin{center}
\includegraphics[width=.80\linewidth]{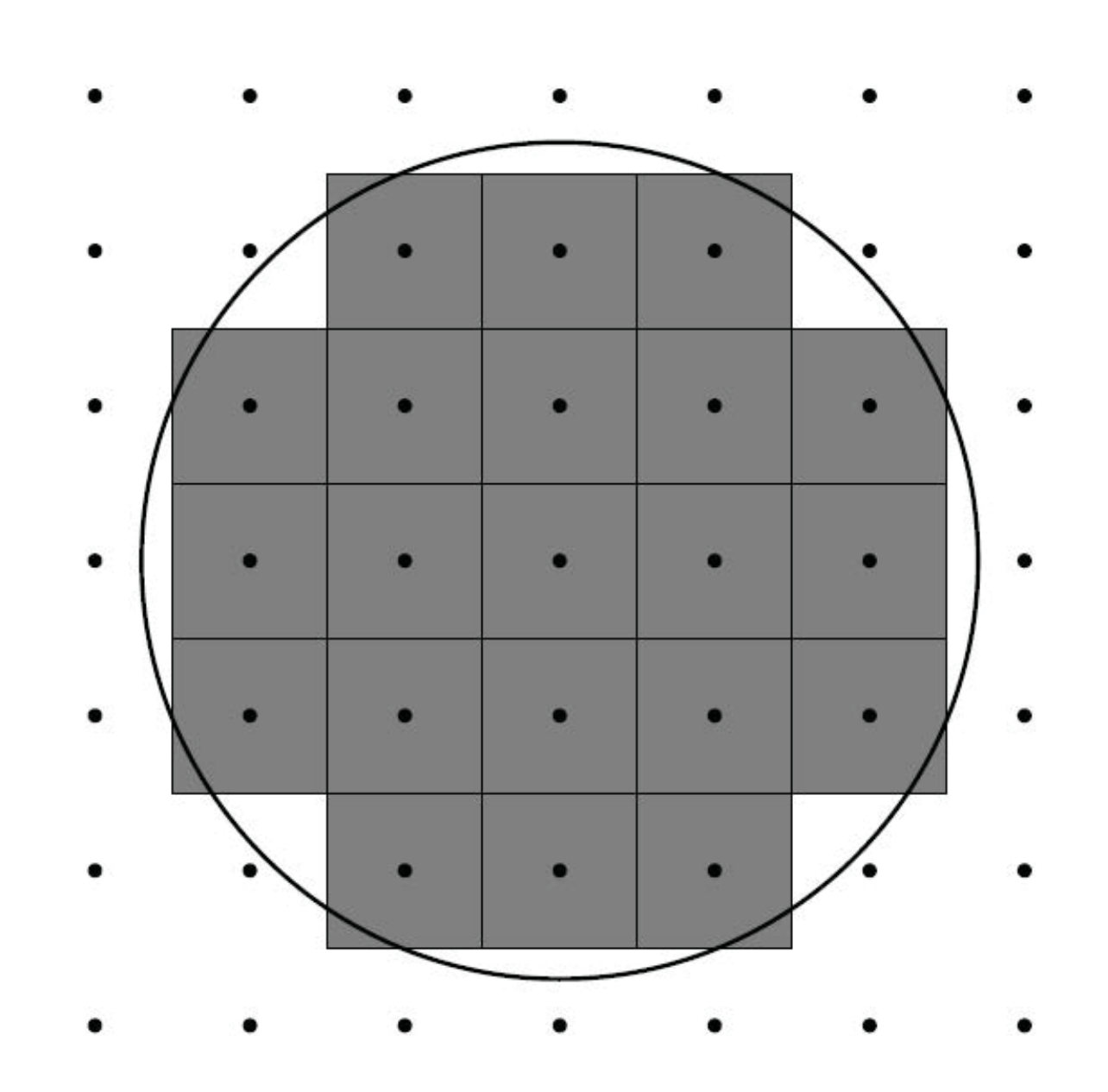} 
\caption{The union of the unit  squares.}
\label{fig:lattice2}
\end{center}
\end{minipage}
\end{figure}
Gauss showed, by a geometric argument, that
\begin{equation}\label{eq:gauss}
 A(t)=\pi t + O\bigl(t^{1/2}\bigr)\quad\text{as}\quad t\to\infty.
\end{equation}
Here, $O$ denotes Landau's symbol. 
More precisely, for a positive-valued function $g$, 
the notation $f(t)=O(g(t))$ as $t\to\infty$ means that
$\limsup_{t\to\infty}|f(t)|/g(t)<\infty$.

Let 
\begin{equation}
 E(t)=A(t)-\pi t.
\end{equation}
Determining the best possible bound on $\theta$ such that $E(t)=O(t^{\theta})$
is a famous open problem known as the Gauss circle problem.
Vorono\"i~\cite{Voronoi1904} (1904) and Sierpi\'nski~\cite{Sierpinski1906} (1906) proved that $\theta\le1/3$.
Since then, numerous studies have been carried out;
however, the best results to date remain 
those of Huxley~\cite{Huxley2003} (2003), $\theta\le\tfrac{131}{416}=0.3149\dots$, 
and Liu and Yang~\cite{Li-Yang-arXiv} (arXiv), $\theta\le0.314483\dots$.

On the other hand, Hardy~\cite{Hardy1915} (1915) proved that $\theta>1/4$; 
more precisely $E(t)\ne O(t^{1/4}(\log t)^{1/4})$.
Moreover, Hardy~\cite{Hardy1917} (1917) conjectured that 
\begin{equation}\label{Hardy conj}
 E(t)=O(t^{1/4+\ve}) \quad\text{for any}\quad \ve>0.
\end{equation}

In recent years, 
Kuratsubo et al.~\cite{Kuratsubo2008,Kuratsubo2009,Kuratsubo2010,Kuratsubo-Nakai2022,Kuratsubo-Nakai-Ootsubo2010,Ootsubo-etal2021} 
have established a close relationship between the lattice point problem and the convergence of Fourier series. 
In particular, they~\cite{Kuratsubo-Nakai2022} proved that in two dimensions, 
the convergence or divergence of Fourier series at the origin is equivalent to the Hardy conjecture \eqref{Hardy conj}.
This connection further highlights the significance of the Hardy conjecture.

Recently, Lester and Wigman~\cite{Lester-Wigman2024} 
strengthened the probabilistic evidence for the Hardy conjecture 
by analyzing the randomness of lattice points near the boundary of the circle. 
In this paper, 
we provide further numerical evidence in support of the Hardy conjecture 
for the Gauss circle problem by computing $E(t)$ explicitly using a supercomputer.

Keller and Swenson~\cite{Keller-Swenson1963} (1963) carried out computations up to $r=\sqrt{t}=259{,}750$, 
while van de Lune and Wattel~\cite{vandeLune-Wattel1990} (1990) and Tromp~\cite{Tromp1990} (1990) 
extended these computations up to $t=58{,}956{,}361{,}256$ and $t=2.29\times 10^{12}$, respectively. 
In this paper, we further extend these computations up to $t=1.22\times 10^{18}$. 
Our computational method is a refinement of Tromp's method in \cite{Tromp1990}.

Our computations indicate that $\theta<0.261$ over the range considered, 
providing further numerical evidence in support of the Hardy conjecture. 
Furthermore, we conjecture that
\begin{equation}
E(t)=O(t^{1/4}(\log t)^{2/5})
\quad\text{or}\quad
E(t)=O(t^{1/4}(\log t)^{1/3})
\quad\text{as}\quad t\to\infty.
\end{equation}

We also found that the extreme values of $E(t)$ are not uniformly distributed over the interval covered by our computations, 
but are concentrated in several narrow subintervals. 
Although these subintervals were identified through numerical experimentation, 
we do not know of any general rule for determining their locations. 
Understanding this phenomenon may require deeper number-theoretic insights.

The organization of this paper is as follows.
In Section~\ref{sec:formulation}, 
we first formulate the problem. 
More precisely, we describe the method used to investigate the extreme values of $E(t)$.
In Section~\ref{sec:program}, 
we describe the Fortran programs used for the computations. 
We employ an improved version of the ``tracker and counter array
algorithm'' and the ``skipping technique'' introduced by Tromp~\cite{Tromp1990}.
In Section~\ref{sec:conjecture}, 
based on the computational results, 
we formulate conjectures concerning the Gauss circle problem.
In Section~\ref{sec:tendency}, 
we describe the concentration of the extreme values of $E(t)$ in several narrow subintervals.
Finally, in the Appendix, 
we provide programs that use this localization property to efficiently compute all the extreme values obtained in our computations.

\section{Formulation}\label{sec:formulation}

First, by computing $A(t)$ and $E(t)$ for small $t$, we obtain the following Figure~\ref{fig:A(t)} and \ref{fig:E(t)}, respectively.
We focus on the extreme values of $E(t)$. 
More precisely, we consider
\begin{equation}\label{emax einf}
\emax(t)=\max_{s\le t} E(s)
\quad\text{and}\quad
\einf(t)=\inf_{s\le t} E(s).
\end{equation}
Then $\emax(t)$ is a step function, 
whereas $\einf(t)$ is a continuous piecewise linear function.

Let $T_+$ denote the set of all discontinuity points of $\emax(t)$, 
and let $T_-$ denote the set of all points 
at which the right derivative of $\einf(t)$ is equal to $0$ 
and the left derivative is equal to $-\pi$. 
Then
\begin{align}
 \emax(t)&=E(t), & t\in T_+, \\
 \einf(t)&=E(t-0)=\lim_{s\to t-0}E(s), & t\in T_-.
\end{align}
Let $T_\pm=T_+\cup T_-$.

Figure~\ref{fig:E(t)} shows the graphs of $\emax(t)$ and $\einf(t)$ for small $t$. 
Figure~\ref{fig:E(t)-1220p} shows the values of $\emax(t)$ and $\einf(t)$ at all $t\in T_{\pm}$ included in our computations. 
The graph shows \sizetplus points from $T_+$ and \sizetminus points from $T_-$.
From this graph, we see that
$$
\emax(t)\le -\einf(t).
$$

\begin{figure}[htbp]
\begin{minipage}{0.3\linewidth}
\begin{center}
\begin{tabular}{rr}
$t$ & $A(t)$  \\
\hline
$0$ & $1$ \\
$1$ & $5$ \\
$2$ & $9$ \\
$3$ & $9$ \\
$4$ & $13$ \\
$5$ & $21$ \\
$6$ & $21$ \\
$7$ & $21$ \\
$8$ & $25$ \\
$9$ & $29$ \\
$10$ & $37$ \\
$11$ & $37$ \\
\end{tabular}
\end{center}
\end{minipage}
\begin{minipage}{0.65\linewidth}
\begin{center}
\includegraphics[width=0.8\linewidth]{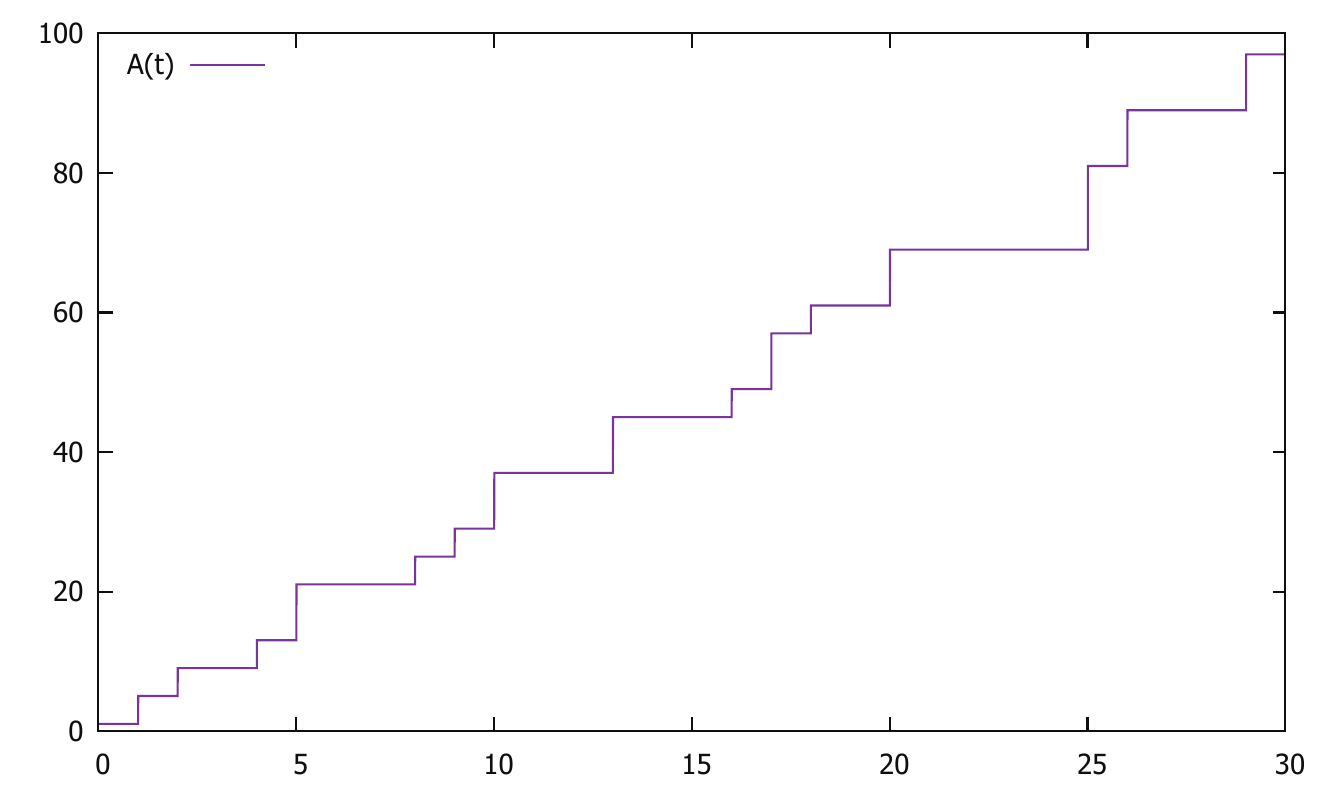} 
\end{center}
\end{minipage}
\caption{$A(t)$}
\label{fig:A(t)}
\end{figure}

\begin{figure}[htbp]
\begin{minipage}{0.3\linewidth}
\begin{center}
\begin{tabular}{lrr}
$t\in T_\pm$ & $\emax(t)$ & $\einf(t)$ \\
\hline
\phantom{0}$0$ & $1.000$ & \\
\phantom{0}$1-0$ & & $-2.142$ \\
\phantom{0}$1$ & $1.858$ & \\
\phantom{0}$2$ & $2.717$ & \\
\phantom{0}$4-0$ & & $-3.566$ \\
\phantom{0}$5$ &  $5.292$ & \\
\phantom{0}$8-0$ & & $-4.133$ \\
$10$ & $5.584$ & \\
$16-0$ & & $-5.265$ \\
$20$ & $6.168$ &  \\
$25-0$ & & $-9.540$ \\
$26$ & $7.319$ & \\
\end{tabular}
\end{center}
\end{minipage}
\begin{minipage}{0.65\linewidth}
\begin{center}
\includegraphics[width=.80\linewidth]{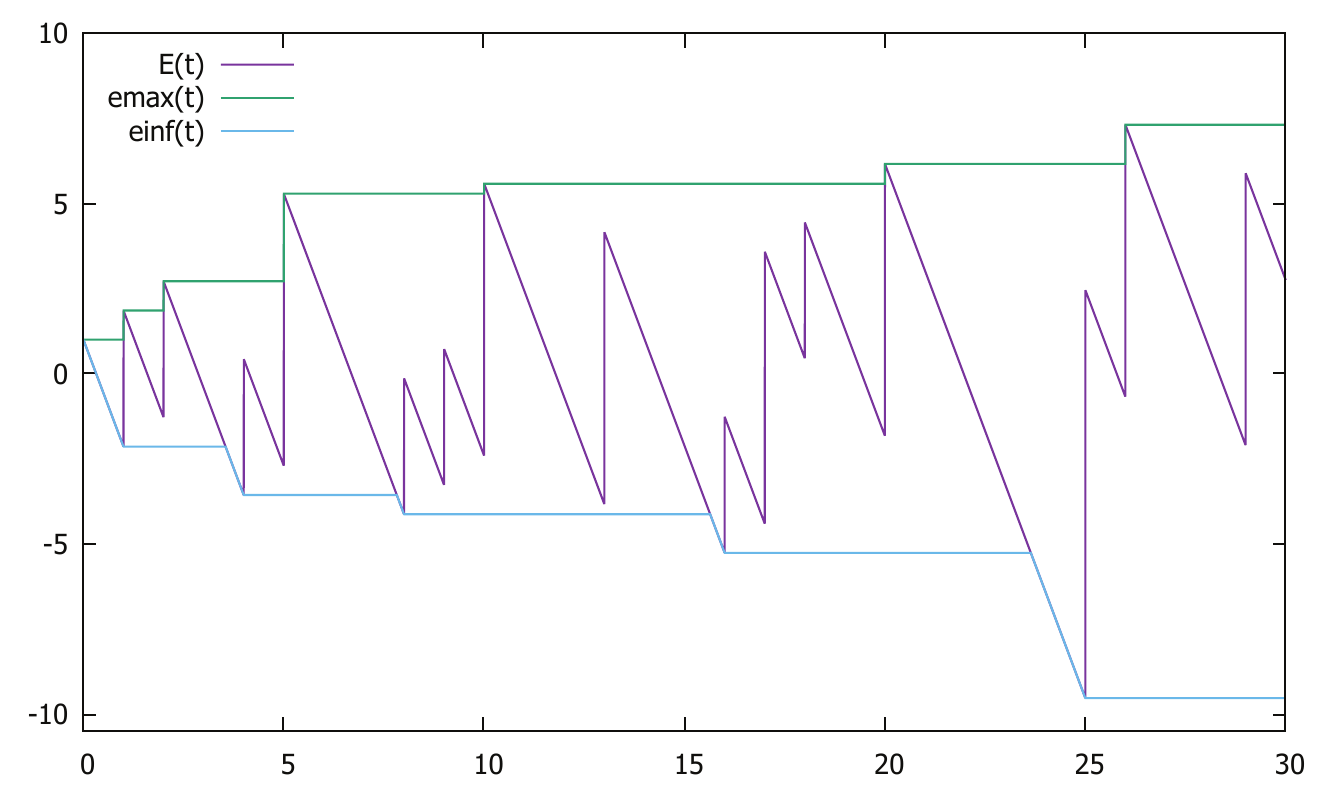} 
\end{center}
\end{minipage}
\caption{$E(t)=A(t)-\pi t$, $\emax(t)$, $\einf(t)$}
\label{fig:E(t)}
\end{figure}

\begin{figure}[htbp]
\centering
\includegraphics[width=0.8\linewidth]{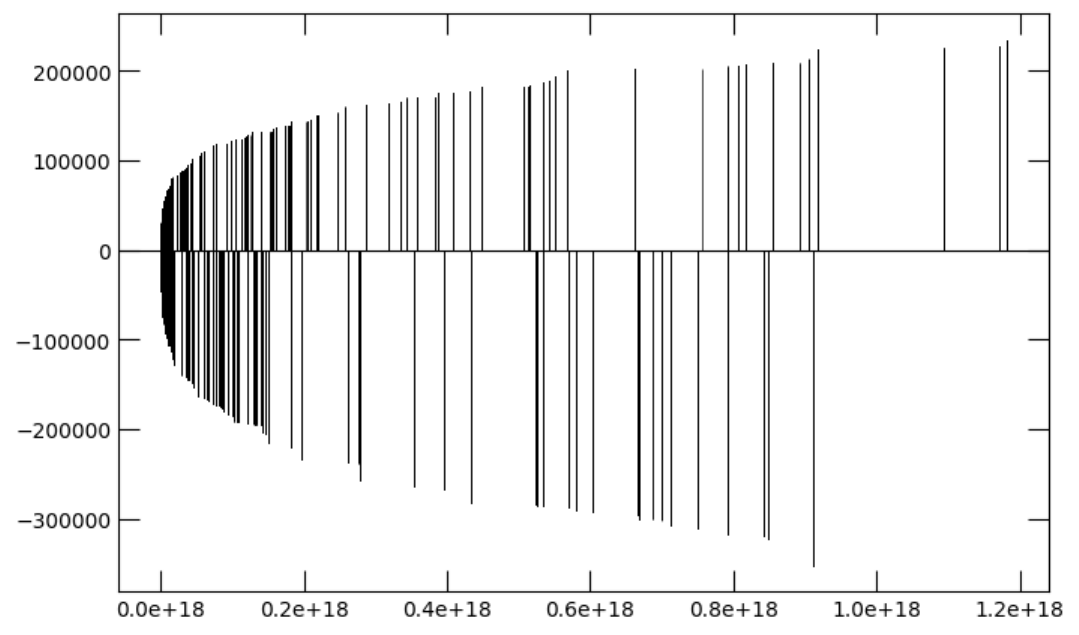}
\caption{$\emax(t)$, $\einf(t)$ at $t\in T_\pm \cap [0,1.22\times10^{18}]$}
\label{fig:E(t)-1220p}
\end{figure}

\newpage

\section{Program}\label{sec:program} 
\subsection{Count of $A(t)$}
In order to count $A(t)$, Tromp \cite{Tromp1990, vandeLune-Wattel1990} proposed ``tracker and counter array algorithm'',
which won over previous ones. 
The algorithm assumes $r$, radius of a circle, as an integer, and computes $2r+1$ values $A(r^2)$, $A(r^2+1),\cdots,A((r+1)^2-1)$
in a single ``pass''. 
A tracker starts at the lattice point $(r,0)$ and visits the all lattice points $(x,y)$ with $0\le y\le x$ 
and $r^2\le x^2+y^2<(r+1)^2$.
An array of $2r+1$ counters $\arraysmallc[0],\cdots,\arraysmallc[2r]$ 
counts the number of lattice points between two circles
for each values of $x^2+y^2-r^2$.
Also, $A(r^2-1)$ is calculated from the shape of the track. Then,
\[
	A(t)=A(r^2-1)+\sum_{i=0}^{t-r^2}\arraysmallc[i],	\quad\quad(t=r^2,\cdots,(r+1)^2-1).
\]

Tromp \cite{Tromp1990} also proposed ``skipping
technique'', where $k$ consecutive counters
$\arraysmallc[ik], \cdots,$ $\arraysmallc[(i+1)k-1]$ are merged into  one new counter
$\arraylargec[i]$, and he added 8 to $\arraylargec[d\,\mbox{div}\,k]$ instead of adding
8 to $\arraysmallc[d]$. 

Our method to count $A(t)$ is a modified version of Tromp's method. We improve following points.
\begin{enumerate}
\item For arbitrary $t_0(\ge 0)$ and $t_1(\ge t_0)$, we compute  $A(t_0), A(t_0+1), \cdots, A(t_1)$ 
in a single ``pass''.
We use trackers of two types. Main tracker starts at the points $(\varl,0)$, where $\varl=\lfloor\sqrt{t_0}\rfloor$ and $\lfloor x \rfloor$ is the greatest integer less than or equal to $x$.
The tracker goes up if current point $(x,y)$ is $x^2+y^2\le t_0$ and goes left otherwise until it arrives on the line
$x=\vark$, where $\vark=\lfloor\sqrt{t_0/2}\rfloor$ (see Figure~\ref{fig:maintracker}). 
On each horizontal grid line, a sub tracker starts from the position of main tracker and goes right until it leaves area $x^2+y^2\le t_1$ (see Figure~\ref{fig:subtrackers}).
Trackers preserve integer function value $x^2+y^2-t_0-1$ while moving,
 so that 
determining whether trackers are inside or outside two circles,
$C(t_0)$ and $C(t_1)$, can be exact
and computationally inexpensive
(concerning the function value of the main tracker's position,
see Figure~\ref{fig:maintracker}).

From shape of the path of the main tracker we know $A(t_0)$, while $A(t_0+1), \cdots, A(t_1)$
are calculated using an array of $t_1-t_0$ counters,
$\arraysmallinc[0], \cdots, \arraysmallinc[t_1-t_0-1]$,
\begin{equation}
	A(t)=A(t_0)+4\sum_{i=0}^{t-t_0-1}\arraysmallinc[i],	\qquad	(t=t_0+1,\cdots,t_1).
	\label{eqn:summingcounters}
\end{equation}
\item In the case we employ skipping technique, 
we merge $2^k$ consecutive counters $\arraysmallinc[i2^k]$, $\cdots$, 
$\arraysmallinc[(i+1)2^k-1]$ into new
counter $\arraylargeinc[i]$.
Since we can take $t_0$ and $t_1$ as arbitrary positive integers,
we take $t_1-t_0$ as a multiple of $2^k$
and merge counters naturally.
The size of array $\arraylargeinc[\cdot]$ is $\paramincsize = (t_1-t_0)/2^k$.
We specify
an address of merged counter by $\arraylargeinc[{\tt ISHFT}(i,-k)]$,
where {\tt ISHFT} is the bit shift function in FORTRAN.
\end{enumerate}
\begin{figure}[htbp]
\begin{minipage}{.48\linewidth}
\begin{center}
\includegraphics[width=.65\linewidth]{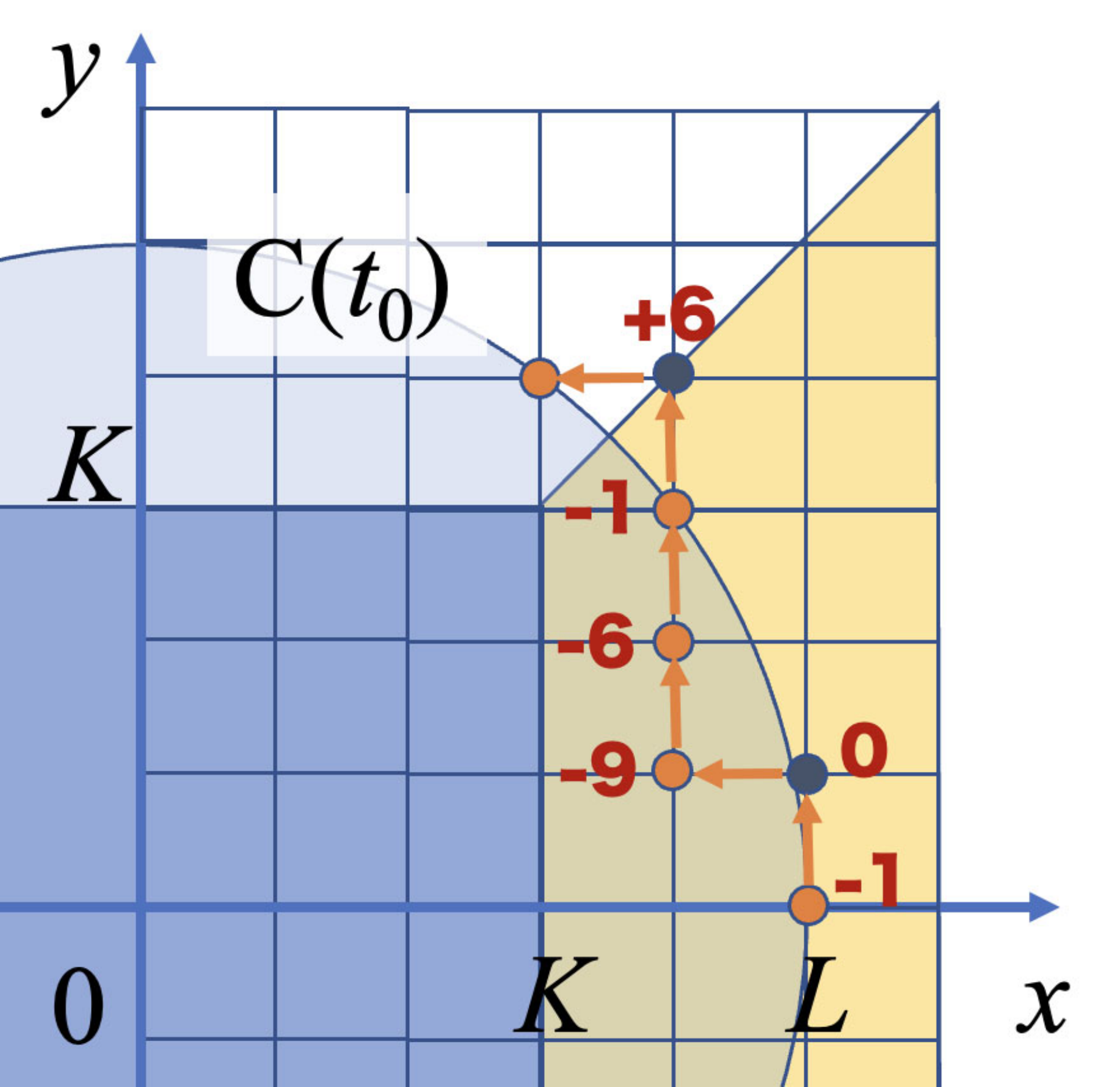}
\caption{Path of the main tracker, $t_0=25$}
\label{fig:maintracker}
\end{center}
\end{minipage}
\quad
\begin{minipage}{.48\linewidth}
\begin{center}
\includegraphics[width=.80\linewidth]{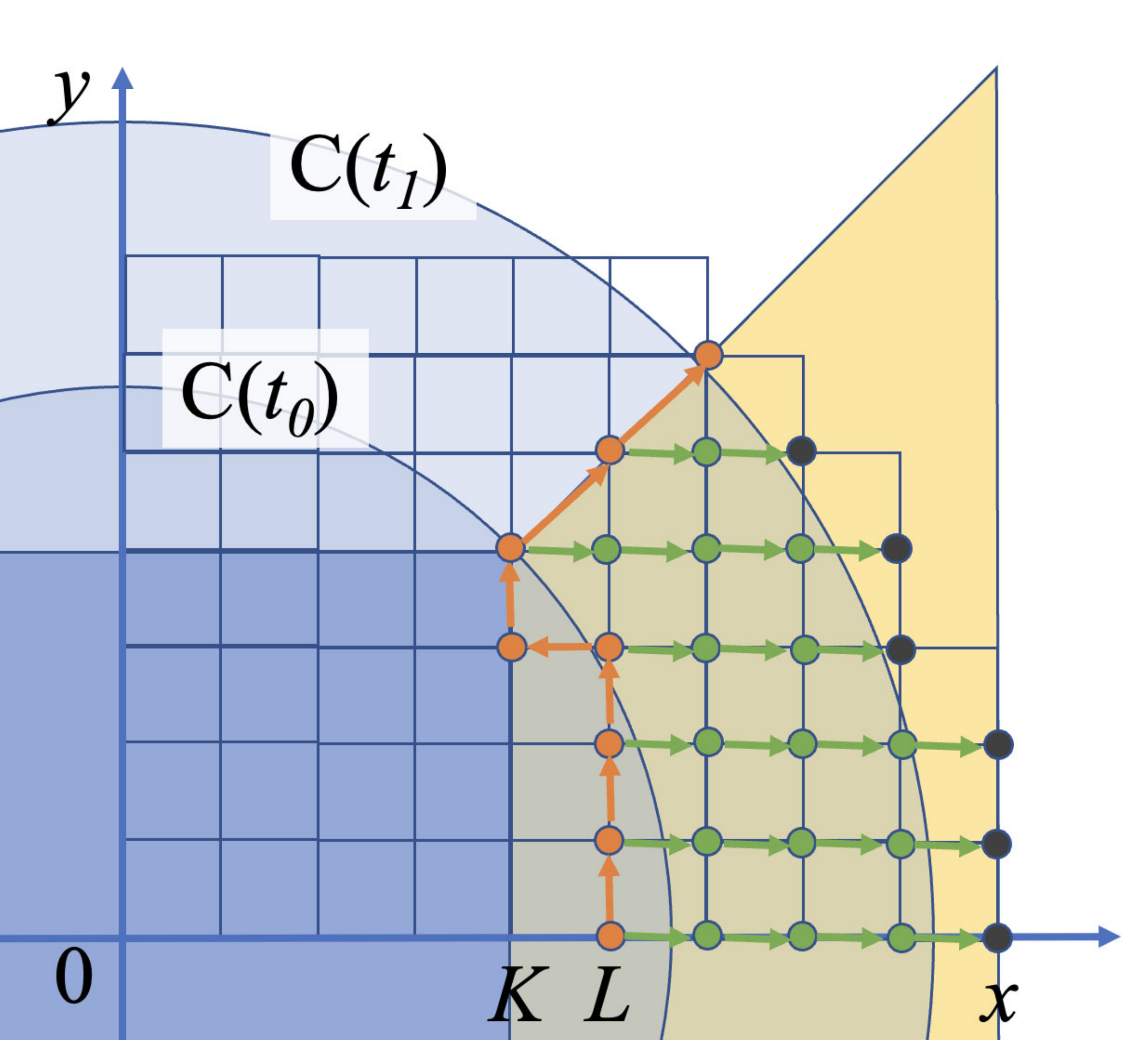}
\caption{Main and subtrackers, $t_0=32$, $t_1=70$}
\label{fig:subtrackers}
\end{center}
\end{minipage}
\end{figure}
We show our FORTRAN90 subroutine ``\subroutinegauss'' to count $A(t_0)$ and to obtain array $\arraysmallinc[\cdot]$
by above method in Listing 1.
Parameters, \paramincsize\ and \paramnshift\ are provided by Module {\tt MY_PARAMETERS}.

In the normal usage (that is, the skipping technique is not used), 
parameter \paramincsize\ is set greater than or equal to $t_1-t_0$,
and, $\paramnshift = 0$. 

In the case the skipping technique is used, if we merge $2^k$
$\arraysmallinc[\cdot]$ counters to a single entry in 
$\arraylargeinc[\cdot]$,
then those parameters are set as $\paramincsize\ge(t_1-t_0)2^{-k}$ and
$\paramnshift=k$.
The skipping technique is further explained in Section 3.3.
\begin{lstlisting}[caption=Subroutine \subroutinegauss\  to count $A(t_0)$ ({\tt gauss.f90})]
SUBROUTINE GAUSS( T0, T1, A, INC )	
  USE MY_PARAMETERS	
  IMPLICIT NONE	
  INTEGER(8), INTENT(IN):: T0, T1
  INTEGER(8), INTENT(OUT):: A
  INTEGER, INTENT(OUT):: INC(0:INCSIZE-1)	
  INTEGER(8):: S, STEP_T, K, L, I, IK, J, IJ,   I1, IJ1
  REAL(16), PARAMETER:: ROOTHALF = SQRT(0.5Q0) 	
  REAL(16) ROOT                       
  ROOT = SQRT( REAL(T0, 16) + 0.5Q0 )  
  INC = 0	
  L = INT( ROOT )	
  K = INT( ROOT * ROOTHALF )
  STEP_T = T1 - T0

  !! At (x,y) we have I=2x-1, J=2y-1 and IJ = x^2+y^2-T0-1
  IK = 2 * K + 1		!!   Value of I at x=K+1
  
  !! Now, we put main tracker at (x,y)=(L,0).
  I = 2 * L - 1 ; J = - 1 ; IJ = L * L - T0 - 1     
  
  !! A sub-tracker starts from (x1,y1)=(L+1,0) and goes right.
  I1 = I + 2 ; IJ1 = IJ + I1  
  DO WHILE (IJ1 < STEP_T)   !! <=> While (x1^2+y1^2 <= T1)
     INC(ISHFT(IJ1,-NSHIFT)) = 1
     I1 = I1 + 2 ; IJ1 = IJ1 + I1
  END DO
  S = 0  !! Clear counter of lattice pts x>K and x^2+y^2<=T0
  DO WHILE (I >= IK)	!! <=>(Equivalent to) While (x>K)
     DO	
        J = J + 2 ; IJ = IJ + J	         !! Main tracker goes up
        !! A sub-tracker starts from (x1,y1)=(x+1,y) and goes right        
        I1 = I + 2 ; IJ1 = IJ + I1                     
        DO WHILE (IJ1 < STEP_T) !! <=> While (x1^2+y1^2 <= T1)
           INC(ISHFT(IJ1,-NSHIFT)) = INC(ISHFT(IJ1,-NSHIFT)) + 2	
           I1 = I1 + 2 ; IJ1 = IJ1 + I1
        END DO
        
        IF (IJ >= 0) EXIT	!! <=> If (x^2+y^2 > t0)
     END DO	
     IF (IJ < STEP_T) THEN	!! <=> If (x^2+y^2 <= T1)
        IF (J < I) THEN	
           INC(ISHFT(IJ,-NSHIFT)) = INC(ISHFT(IJ,-NSHIFT)) + 2	
        ELSE	
           INC(ISHFT(IJ,-NSHIFT)) = INC(ISHFT(IJ,-NSHIFT)) + 1	
        END IF	
     END IF	
     S = S + J !! S counts lattice points in x^2+y^2 <= t and x > K
     IJ = IJ - I ; I = I - 2	 !! Main tracker goes left
  END DO	
  
  !! Now, main tracker arrives at x = K, and y = K-2, K-1, K or K+1. 
  DO WHILE (J < I)	!! <=> While (y<K)
     J = J + 2 ; IJ = IJ + J	!! Main tracker goes up

     !! A sub tracker starts from (x1,y1)=(K+1,y) and goes right
     I1 = I + 2 ; IJ1 = IJ + I1	
     DO WHILE (IJ1 < STEP_T)	!! <=> While (x1^2+y1^2 <= T1)
        INC(ISHFT(IJ1,-NSHIFT)) = INC(ISHFT(IJ1,-NSHIFT)) + 2	
        I1 = I1 + 2 ; IJ1 = IJ1 + I1	
     END DO	
  END DO	
  
  !! Now, main tracker is at (x,y)=(K,K) or (K,K+1)
  I = I + 2 ; IJ = IJ + I	!! Main tracker goes right (x becomes K+1)
  IF (J < I) THEN !! <=> If (y == K)	
      J = J + 2 ; IJ = IJ + J	!! Main tracker goes up (y becomes K+1)
      IF (IJ < STEP_T) THEN	!! <=> If ((K+1)^2+(K+1)^2 <= T1)
          INC(ISHFT(IJ,-NSHIFT)) = INC(ISHFT(IJ,-NSHIFT)) + 1

	 !! Sub tracker starts from (x1,y1)=(K+2,K+1) and goes right          	
          I1 = I + 2 ; IJ1 = IJ + I1	
          DO WHILE (IJ1 < STEP_T)	!! <=> While (x1^2+y1^2 <= T1)
              INC(ISHFT(IJ1,-NSHIFT)) = INC(ISHFT(IJ1,-NSHIFT)) + 2
              I1 = I1 + 2 ; IJ1 = IJ1 + I1	
          END DO	
      END IF	
  END IF	
  
  !! Now, main tracker is at (x,y)=(K+1,K+1)
  DO
     I = I + 2 ; J = J + 2 ; IJ = IJ + I + J !! Goes upper right
     IF (IJ >= STEP_T) EXIT  !! <=> If (x^2+y^2 > T1)
     INC(ISHFT(IJ,-NSHIFT)) = INC(ISHFT(IJ,-NSHIFT)) + 1

     !! Sub tracker starts from (x1,y1)=(x+1,y) and goes right
     I1 = I + 2 ; IJ1 = IJ + I1
     DO WHILE (IJ1 < STEP_T) !! <=> While (x1^2+y1^2 <= T1)
        INC(ISHFT(IJ1,-NSHIFT)) = INC(ISHFT(IJ1,-NSHIFT)) + 2
        I1 = I1 + 2 ; IJ1 = IJ1 + I1
     END DO
  END DO
  A = INT(IK, KIND=8) * INT(IK, KIND=8) + 4 * S
END SUBROUTINE GAUSS
\end{lstlisting}
\subsection{Detection of elements of $T_\pm$}
In order to detect elements of $T_\pm$ in $t_{\rm FIRST}< t\le t_{\rm LAST}$,
the standard approach is to prepare values of $\emax(t_{\rm FIRST})$ and $\einf(t_{\rm FIRST})$
and to check the changes of functions of $\emax(t)$ and $\einf(t)$
while increasing $t$. 

Listing 2 shows FORTRAN90 subroutine ``\subroutineexplore'' to detect elements of $T_\pm$
in the interval $(t_{\rm FIRST}, t_{\rm LAST}]$.
The interval are divided into subintervals whose size is \paramincsize,
except for the last subinterval whose size may be less than \paramincsize.
For each subinterval $(t_0,t_1]$, \subroutinegauss\ subroutine in the normal usage works, and 
$A(t_0)$ and counters $\arraysmallinc[i]$ are obtained, where $1\le i\le t_1-t_0$.
Since points $T_\pm$ only appear at $t$ where $A(t)-A(t-0)>0$,
that can be known by $\arraysmallinc[t-t_0]>0$, we investigate such $t$'s.

In addition to detection of $T_\pm$, we check exact match of 
$A(t_0)$ with {\tt LAST_A}, where {\tt LAST_A}
is $A(T_0)$ that was obtained by summing up 
(\ref{eqn:summingcounters}) in the previous subinterval.
This check is useful to secure reliability of the counting $A(t)$.
As a matter of fact, we had sensed several unexpected errors
by this check, and had fixed bugs in the programs.
\begin{lstlisting}[caption=Subroutine \subroutineexplore\ to detect elements of $T_\pm$ ({\tt explore.f90})]
SUBROUTINE EXPLORE(FIRST_T, LAST_T, EMAX, EINF, CHK_EMAX, CHK_EINF, NEXS)
  USE MY_PARAMETERS
  IMPLICIT NONE
  INTEGER(8), INTENT(IN):: FIRST_T, LAST_T
  REAL(8), INTENT(INOUT):: EMAX, EINF
  LOGICAL, INTENT(IN):: CHK_EMAX, CHK_EINF
  INTEGER, INTENT(OUT):: NEXS ! Total number of T_\pm detected in the interval

  REAL(16), PARAMETER:: PI = ATAN(1.0Q0)*4.0Q0
  INTEGER(8) T0, T1, T, A, LAST_A
  REAL(8) E
  REAL(16) REAL_T, ROOT_T
  INTEGER, DIMENSION(INCSIZE):: INC !! Index=1,...,INCSIZE in this subroutine
  INTEGER I
  
  NEXS = 0
  T0 = FIRST_T
  DO
     T1 = T0 + INCSIZE
     IF (T1 > LAST_T) T1 = LAST_T

     CALL GAUSS( T0, T1, A, INC)
     
     ! Check that A(T0) == previous A(T1)
     IF (T0 > FIRST_T .AND. A /= LAST_A) THEN
        PRINT *,"ERROR: A(T) DO NOT MATCH AT T=",T0,", A=",A,", LAST_A=",LAST_A
        STOP
     END IF
     IF (T0 == LAST_T) EXIT
     
     DO I = 1, T1 - T0
        IF (INC(I) > 0) THEN
           T = T0 + I ; REAL_T = REAL(T, 16)
           IF (CHK_EINF) THEN
              E = REAL(A, 16) - PI * REAL_T
              IF (E < EINF) THEN
                 EINF = E ; NEXS = NEXS + 1
                 WRITE (*,"('T = ',I19,'(-0)  A = ',I19,'   E = ',F12.3)") T, A, EINF
              END IF
           END IF
           A = A + INC(I) * 4
           ! PRINT *,"T=",T,", A=",A
           IF (CHK_EMAX) THEN
              E = REAL(A, 16) - PI * REAL_T
              IF (E > EMAX) THEN
                 EMAX = E ; NEXS = NEXS + 1
                 !WRITE (*,"('T = ',I19,'      A = ',I19,'   E = ',F12.3)") T, A, EMAX
                 ROOT_T = SQRT(REAL_T)-1E-7
                 WRITE (*,"('T = ',I19,'      E = ',F12.3,' POS = ',F7.4)") T, EMAX, ROOT_T-FLOOR(ROOT_T)
              END IF
           END IF
        END IF
     END DO
     LAST_A = A
     T0 = T1
  END DO
END SUBROUTINE EXPLORE
\end{lstlisting}

\subsection{Coarse-graining of elements of $T_\pm$}
In Subroutine \subroutineexplore, calculations of $E(t)$ at all $t$'s
where $A(t)-A(t-0)>0$ regardless that  $|E(t)|$ is large or not
are computationally expensive.
The coarse-graining is effective to reduce computation costs.

Let $\Delta t$ be a positive integer and consider that we only compute $A(t)$ every $\Delta t$
employing the skipping technique in Subroutine \subroutinegauss.
Now, let consider we know $A(t_1)$ and $A(t_2)$, ($t_2=t_1+\Delta t$).
It is clear that
\[
	E(t_1)-(t-t_1)\pi \le E(t) \le E(t_2)+(t_2-t)\pi	,	\qquad t_1\le t\le t_2.
 \]
(see Figure~\ref{fig:E(t)}).
From this inequality we have
\begin{align*}
	&T_+\cap(t_1,t_2]=\emptyset
	 \qquad \mbox{if }E(t_2)\ \le\ \emax(t_1)-(\Delta t-1)\pi,
	 \\
	&T_-\cap(t_1,t_2]=\emptyset
	 \qquad \mbox{if }E(t_1)\ \ge\ \einf(t_1)+(\Delta t)\pi,
\end{align*}
so that we can extract ``$T_\pm$-able intervals'', where elements of $T_\pm$ may 
exist in the intervals.

Listing 4 shows FORTRAN Subroutine ``\subroutinecoarsegraining'' by the idea.
The subroutine outputs $T_\pm$-able intervals. Those intervals 
should be explored afterwards by Subroutine \subroutineexplore.
Listing 3 shows a tiny operation example of the coarse-graining,
where $t_{\rm FIRST}=0$, $t_{\rm LAST}=30$, 
and $\Delta t=2^1$ ($\paramnshift=1$) are set.
We see that $T_+\cap\left\{ (10,16]\cup(20,24]\cup(26,30] \right\}
=T_-\cap\left\{ (20,22]\cup(26,30] \right\}=\emptyset$ from the result.
\begin{lstlisting}[caption=Operating example of Subroutine \subroutinecoarsegraining \ ({\tt testcoarse.f90})]
! to compile: $gfortran testcoarse.f90 coarse.f90 gauss.f90
!
MODULE MY_PARAMETERS
  IMPLICIT NONE
  INTEGER(4), PARAMETER:: INCSIZE = 5
  INTEGER(4), PARAMETER:: NSHIFT = 1
END MODULE MY_PARAMETERS

PROGRAM MAIN
  REAL(8):: EMAX=0.0_8, EINF=0.0_8
  CALL COARSE_GRAINING(0_8, 30_8, EMAX, EINF)
END PROGRAM MAIN
!
! Execution result (Delta t=2):
! EMAX                   0                  10
! EINF                   0                  20
! EMAX                  16                  20
! EINF                  22                  26
! EMAX                  24                  26
\end{lstlisting}
\begin{lstlisting}[caption=Subroutine \subroutinecoarsegraining \ ({\tt coarse.f90})]
SUBROUTINE COARSE_GRAINING(FIRST_T,LAST_T,EMAX,EINF)
  !! Operating conditions: MOD(LAST_T-FIRST_T, INCSIZE*(2^NSHIFT) ) = 0
  USE MY_PARAMETERS
  IMPLICIT NONE
  INTEGER(8), INTENT(IN):: FIRST_T, LAST_T
  REAL(8), INTENT(INOUT)::  EMAX, EINF
  REAL(16), PARAMETER:: PI = ATAN(1.0Q0)*4.0Q0
  INTEGER(8), PARAMETER:: DT=ISHFT(1,NSHIFT)
  INTEGER(8), PARAMETER:: STEP_T=DT*INCSIZE
  REAL(8) E,THRESHMAX,THRESHINF
  INTEGER(8) T0, T1, T, T1MAX, T1INF, T2MAX, T2INF, A, PREV_A
  REAL(16) REAL_T
  INTEGER, DIMENSION(0: INCSIZE-1):: INC !! Index=0...,INCSIZE-1
  INTEGER I

  IF (MOD(LAST_T-FIRST_T,STEP_T) /= 0) THEN
     PRINT *,"OPERATING CONDITION ERROR"
     STOP
  END IF
  THRESHMAX = EMAX -(DT-1) * PI ; THRESHINF = EINF + DT * PI
  T1INF = -1 ; T1MAX = -1
  !
  DO T0 = FIRST_T, LAST_T, STEP_T
     T1 = T0 + STEP_T
     IF (T1 > LAST_T) T1 = LAST_T
     
     CALL GAUSS(T0, T1, A, INC)

     IF (T0 == FIRST_T) THEN
        REAL_T = REAL(T0,16) ; E = REAL(A,16) - PI * REAL_T
     ELSE IF (A /= PREV_A) THEN
        PRINT *, "ERROR: A(T) DO NOT MATCH AT T=",T0
        STOP
     END IF
     IF (T0 == LAST_T) EXIT

     T = T0
     DO I = 0, INCSIZE-1
        IF (E < THRESHINF) THEN
           IF (T1INF < 0) THEN !! If new (T_)-able interval
              T1INF = T !! Attention: +1 is removed
           END IF
           T2INF = T + DT 
           IF (E < EINF) THEN !! If einf() has new value at T
              EINF = E ; THRESHINF = EINF + DT * PI
           END IF
        ELSE
           !! Output a completed (T_)-able interval
           IF (T1INF >= 0) THEN 
              PRINT "(A4,2I20)",'EINF',T1INF,T2INF
              T1INF = -1
           END IF
        END IF
        T = T + DT ; A = A + INC(I) * 4
        !PRINT *,"T=",T,", A=",A
        REAL_T = REAL(T,16) ; E = REAL(A,16) - PI * REAL_T ! new A(T)
        IF (E > THRESHMAX) THEN
           IF (T1MAX < 0) THEN !! If new (T+)-able interval
              T1MAX = T-DT !! Attention: +1 is removed
           END IF
           T2MAX = T
           IF (E > EMAX) THEN !! If emax() has new value at T
              EMAX = E ; THRESHMAX = EMAX - (DT-1) * PI
           END IF
        ELSE
           !! Output a complete (T+)-able interval
           IF (T1MAX >= 0) THEN
              PRINT "(A4,2I20)",'EMAX',T1MAX,T2MAX
              T1MAX = -1
           END IF
        END IF
     END DO
     PREV_A = A
  END DO
  IF (T1MAX >= 0) THEN
     PRINT "(A4,2I20)",'EMAX',T1MAX,T2MAX
  END IF
  IF (T1INF >= 0) THEN
     PRINT "(A4,2I20)",'EINF',T1INF,T2INF
  END IF
END SUBROUTINE COARSE_GRAINING
\end{lstlisting}
\subsection{Implementation on supercomputers}
Most computations have been done on supercomputers. 
We use flat MPI parallel computer environment. 
If we compute $E(t)$, for $t\in(T_1,T_2]$ on computer having $n$ cores, 
the interval is divided into $n$ subintervals whose length are approximately equal:
\[
	(T_1,T_2]
	=\left(t\,_{\rm FIRST}^0,t\,_{\rm LAST}^0\right]\cup\cdots
	\cup\left(t\,_{\rm FIRST}^{n-1},t\,_{\rm LAST}^{n-1}\right]
\]
and $i$th core  computes about $\left(t\,_{\rm FIRST}^i,t\,_{\rm LAST}^i\right]$
using Subroutine \subroutineexplore\ or
Subroutine \subroutinecoarsegraining\ independently and in parallel.

In the situation, 
we have not yet known $\emax(t\,_{\rm FIRST}^i)$ and $\einf(t\,_{\rm FIRST}^i)$,
$i=0,\cdots,n-1$. 
We therefore use ``loose ${\tt emax}$/${\tt einf}$ argument setting and post-processing approach'',
that is, we provide $\emax(T_1)$ or smaller quantity as {\tt emax} argument of \subroutineexplore\ subroutine (or \subroutinecoarsegraining\ subroutine) and 
$\einf(T_1)$ or larger quantity as {\tt einf} argument for the subroutine instead
of them. As a result, many ``fake'' $T_\pm$ are reported from cores. 
We extract ``true'' $T_\pm$ in post-processing.

\subsection{Job data table}
We have proceeded with the calculation for progressively larger value of $t$ seamlessly
for over the past few years. This progress reached $t=1.22\time10^{18}$ in March 2026.

Since job execution time on the supercomputer is limited to 48 hours, for example,
we had set appropriate $t$-intervals and had submitted each interval into a job. 
We had days when we computed a sequence of intervals in parallel.
In this case also, we use the loose {\tt emax}/{\tt einf} argument setting and post-processing approach.

Job table including details of the jobs, input data and output results are presented
in the electric appendix.
An example of key items in the job table is shown in Table 1.
\begin{table}[htbp]
\begin{center}
\caption{An example of key items in the job table}
\begin{tabular}{|c|c|c|r|c|c|c|c|}
\hline
$T_1$&    &     &\variniemax&detected&&&$s\in S_{\max}$\\
$T_2$&job&files&\varinieinf&$T_\pm$&\varemax	&\vareinf 
				&or $ S_{\inf}$\\
\hline
4.5P 	&I-68&[L]	 & 48343  &
			4 652 163G& &-83287& 68206769 \\
			\cline{5-8}
4.89P&	&[S] &               &
                         4 739 794G&54538&   & 68846165\\
			\cline{5-8}
          &&&&      4 861 592G&  &-85950& 69725124\\               
\hline
4.89P &I-69& [L] & 48000 & &&&	\\
4.9012P &   &[S] &            &&&&\\
\hline
4.9012P &I-70& [L] & 54538  &
                         4 902 366G & 58574 &  & 70016900\\
                         \cline{5-8}
5.6P      &&        [S]  &&
   	               5 573 059G &   & -86707& 74652925\\ 
\hline
\multicolumn{8}{|c|}{$\cdots$ $\cdots$ $\cdots$}\\
\hline
204P & II-563&[CO]&140000&
                        204 171 038G & 144235 && 451852895\\
206P &          &[L]    &-230000 &&&&\\
\hline
206P & II-929&[CO]&140000 &&&&\\
208P &          &[L]   &-230000 &&&&\\
\hline
208P & II-288&[CO]&140000&
                        208 821 875G & 145518 && 456970323\\
210P &         &[L]    &-230000 &&&&\\
\hline
\end{tabular}
\end{center}
\end{table}

In each row, the job computed interval $t\in(T_1,T_2]$,
displayed in petas ($10^{15}$).
Quantity \variniemax\ is value
$\emax(T_1)$ or a lower value set for the job.
Quantity \varinieinf\ is the counterpart for $\einf(T_1)$,
however if its column is empty, we substituted $-(\variniemax)$ for \varinieinf.
File ``[L]'' is a log file of the job, 
and candidates of $T_\pm$ detected in the job are recorded
in file ``[S]'', however some filtering are required
if correct $\emax(T_1)$ and $\einf(T_1)$ were not provided for the job.  
If jobs employed the coarse-graining method, ``[CO]'' file
appears. Detected $T_\pm$-able intervals are recorded in the
file. And post-processing computation had been done. 

Detected $T_\pm$, displayed in gigas ($10^9$),
 are shown in the column.
If many $T_\pm$ were detected in a cluster,  
we displayed a single item. 
In \varemax\ and \vareinf\ columns, values $\emax(T_2)$ and $\einf(T_2)$ are shown if these values have updated to new
values in the job.
In $s$ column, we showed elements of $S_{\max}$
or $S_{\inf}$.
Clustering of $T_\pm$ and definitions of $S_{\max}$
and $S_{\inf}$ will be discussed in Section 5.

The computation on the largest $t$ interval was
done on ``Miyabi-C'' in JCAHPC (Joint Center for Advanced High Performance Computing).  
It has 190 nodes. One node has two CPUs of Intel Xeon CPU Max 9480
and 128GB memory. Each CPU has 56 cores.
We therefore computed 112 subintervals in parallel.
Each core can have 1GB memory ($\approx$ 128GB/112 core),
so we set $\paramincsize=2^{26}$ (the size of array \arraylargeinc\  is 128MiB) and the coarse-graining 
 method with $\Delta t=2^{12}$ is used.

\section{Conjecture within the range of our computations}\label{sec:conjecture}

Our computations yielded \sizetplus elements of $T_+$ and \sizetminus elements of $T_-$.
Based on these data, we formulate the following conjectures.

Figures~\ref{fig:emax-exp} and  \ref{fig:einf-exp} display the exponents $a_+$ and $a_-$
in the estimate $\emax(t)\sim t^{a_+}$ and $\einf(t)\sim -t^{a_-}$, respectively,
obtained by least-squares fitting to the computed data. 
The fitting is based on the values of $\emax(t)$ and $\einf(t)$ for $t\in T_{\pm}$. 
The horizontal axis indicates the number of data points included in the fit, 
beginning with the largest computed values of $t\in T_{\pm}$. 
Figures~\ref{fig:emax-exp} and  \ref{fig:einf-exp} indicates that $a_+<0.259$ and $a_-<0.261$.

Similarly, Figures~\ref{fig:emax-log} and  \ref{fig:einf-log} display the exponents $b_+$ and $b_-$
in the estimate $\emax(t)\sim t^{1/4}(\log t)^{b_+}$ and $\einf(t)\sim -t^{1/4}(\log t)^{b_-}$, respectively,
Figures~\ref{fig:emax-log} and  \ref{fig:einf-log} indicates that $b_+<0.33$ and $b_-<0.4$.

Using these values of $b_{\pm}$,
Figures~\ref{fig:E(t)-log-0.4} and \ref{fig:E(t)-log-3} show the graphs of
\begin{equation*}
 \frac{\emax(t)}{1+t^{1/4}(\log t)^{2/5}},
 \frac{\einf(t)}{1+t^{1/4}(\log t)^{2/5}}
 \quad{and}\quad
 \frac{\emax(t)}{1+t^{1/4}(\log t)^{1/3}},
 \frac{\einf(t)}{1+t^{1/4}(\log t)^{1/3}},
\end{equation*}
respectively, for $t\in T_{\pm}$.
The horizontal axis represents $t$ on a logarithmic scale.
These observations lead us to the following conjecture.
\begin{equation*}
 E(t)=O(t^{1/4}(\log t)^{2/5})
 \quad\text{or}\quad
 E(t)=O(t^{1/4}(\log t)^{1/3})
\quad\text{as}\quad t\to\infty.
\end{equation*}
For comparison, Figure~\ref{fig:E(t)-log-0.3} shows the graphs of
\begin{equation*}
 \frac{\emax(t)}{1+t^{1/4}(\log t)^{3/10}}
 \quad\text{and}\quad
 \frac{\einf(t)}{1+t^{1/4}(\log t)^{3/10}}.
\end{equation*}
This figure also suggests that
\begin{equation*}
 \emax(t)=O\bigl(t^{1/4}(\log t)^{3/10}\bigr)\quad\text{as}\quad t\to\infty.
\end{equation*}

\begin{figure}[H]
\centering
\includegraphics[width=0.6\linewidth]{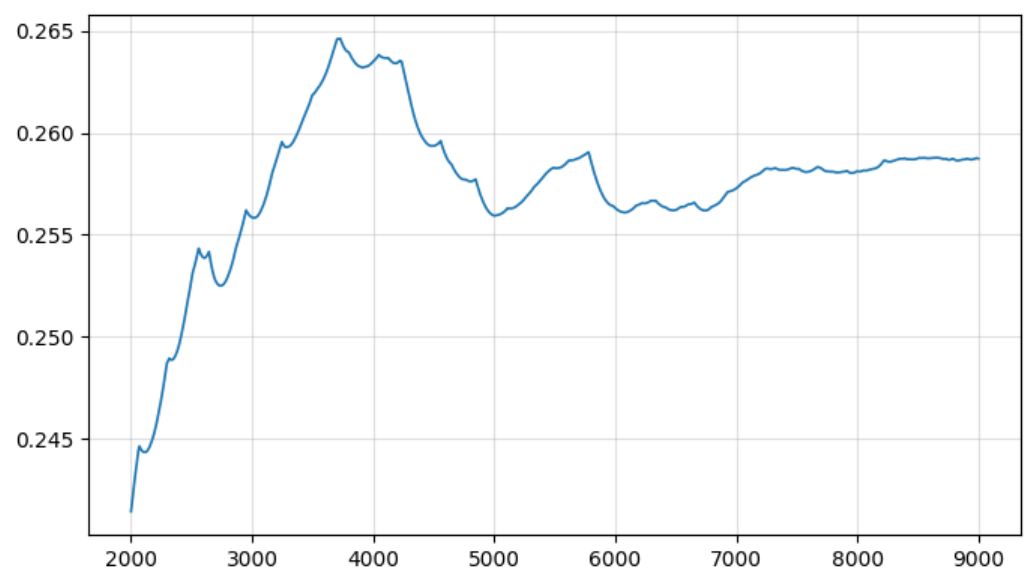}
\caption{Asymptotic exponent $a_+$ in $\emax(t)\sim t^{a_+}$}
\label{fig:emax-exp}
\end{figure}

\begin{figure}[H]
\centering
\includegraphics[width=0.6\linewidth]{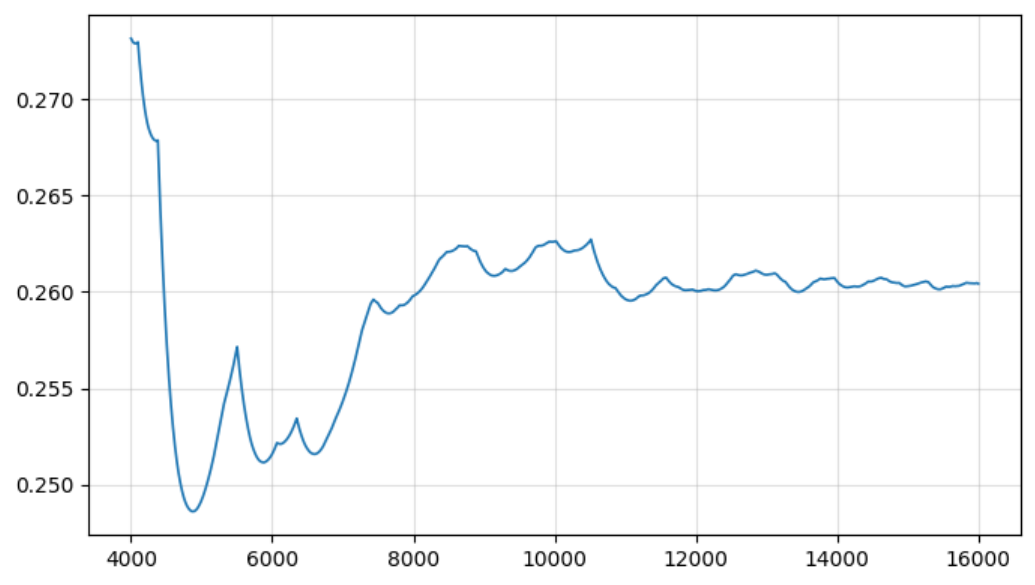}
\caption{Asymptotic exponent $a_-$ in $\einf(t)\sim -t^{a_-}$}
\label{fig:einf-exp}
\end{figure}

\begin{figure}[H]
\centering
\includegraphics[width=0.6\linewidth]{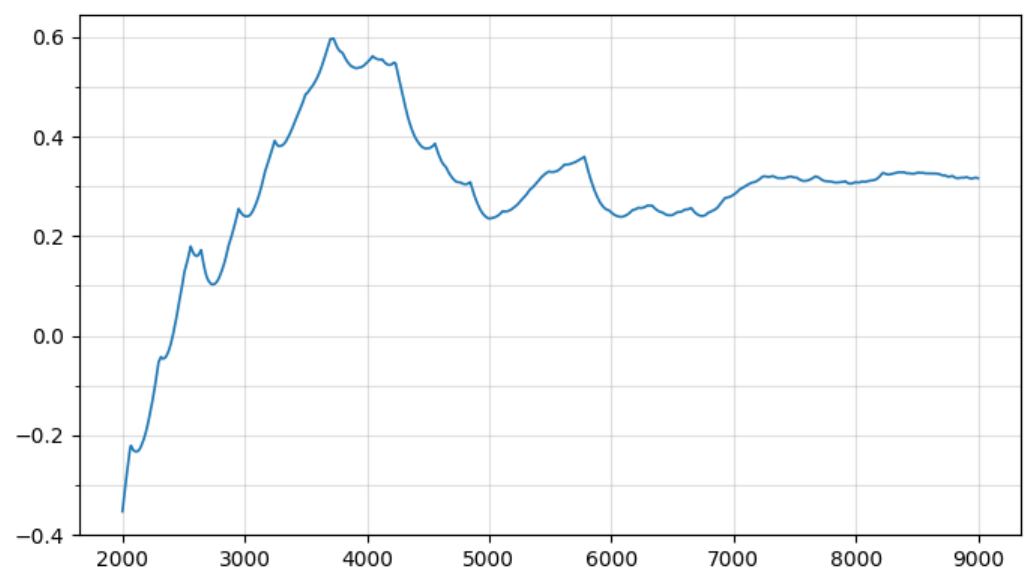}
\caption{Asymptotic exponent $b_+$ in $\emax(t)\sim t^{1/4}(\log t)^{b_+}$}
\label{fig:emax-log}
\end{figure}

\begin{figure}[H]
\centering
\includegraphics[width=0.6\linewidth]{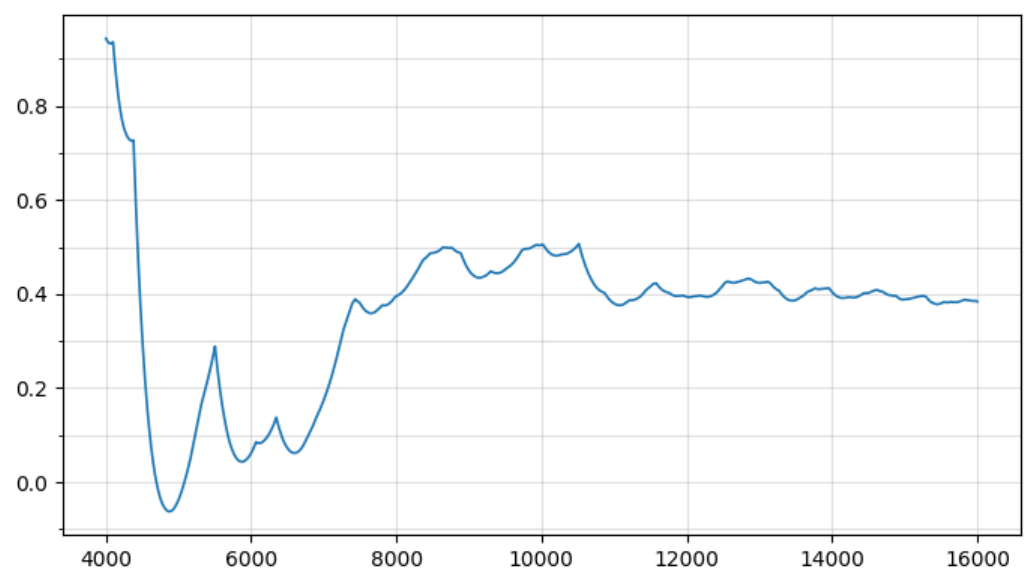}
\caption{Asymptotic exponent $b_-$ in $\einf(t)\sim -t^{1/4}(\log t)^{b_-}$}
\label{fig:einf-log}
\end{figure}

\begin{figure}[H]
\centering
\includegraphics[width=0.53\linewidth]{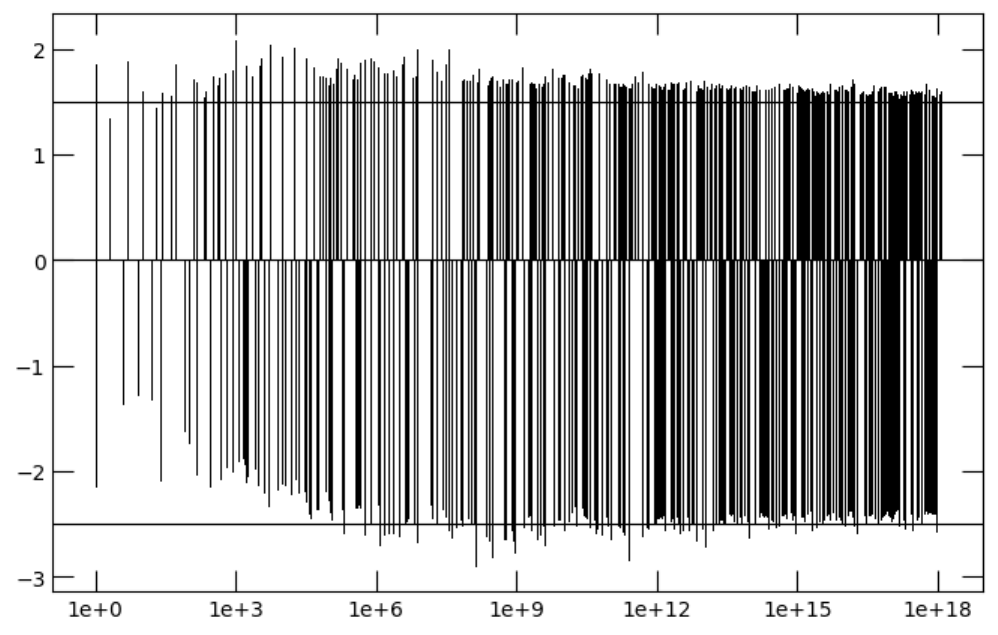}
\caption{$\emax(t)/(1+t^{1/4}(\log t)^{2/5})$, $\einf(t)/(1+t^{1/4}(\log t)^{2/5})$ 
            for $t\in T_\pm$}
\label{fig:E(t)-log-0.4}
\end{figure}

\begin{figure}[H]
\centering
\includegraphics[width=0.53\linewidth]{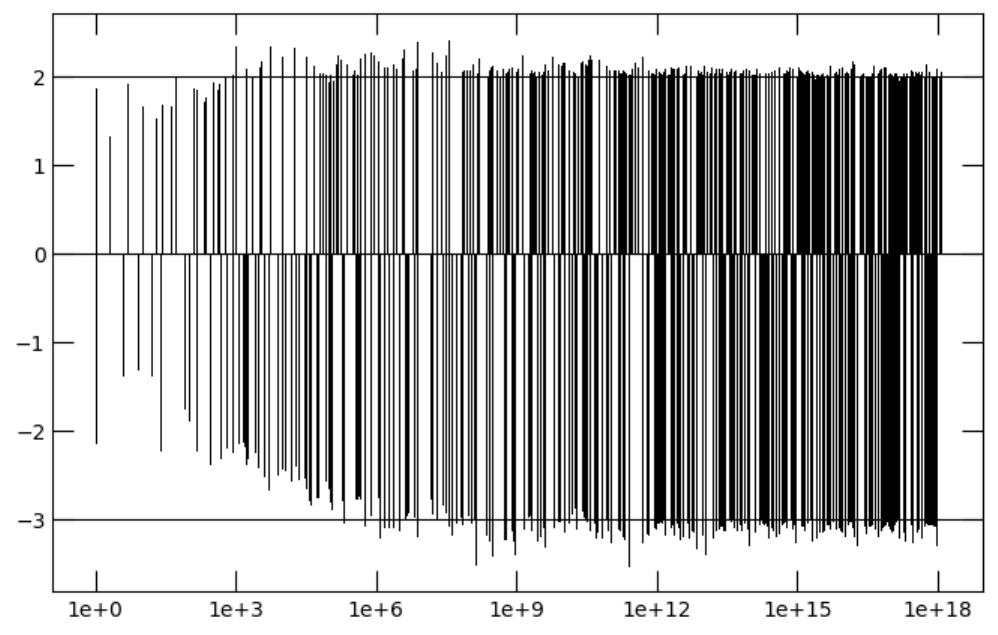}
\caption{$\emax(t)/(1+t^{1/4}(\log t)^{1/3})$, $\einf(t)/(1+t^{1/4}(\log t)^{1/3})$ 
            for $t\in T_\pm$}
\label{fig:E(t)-log-3}
\end{figure}

\begin{figure}[H]
\centering
\includegraphics[width=0.53\linewidth]{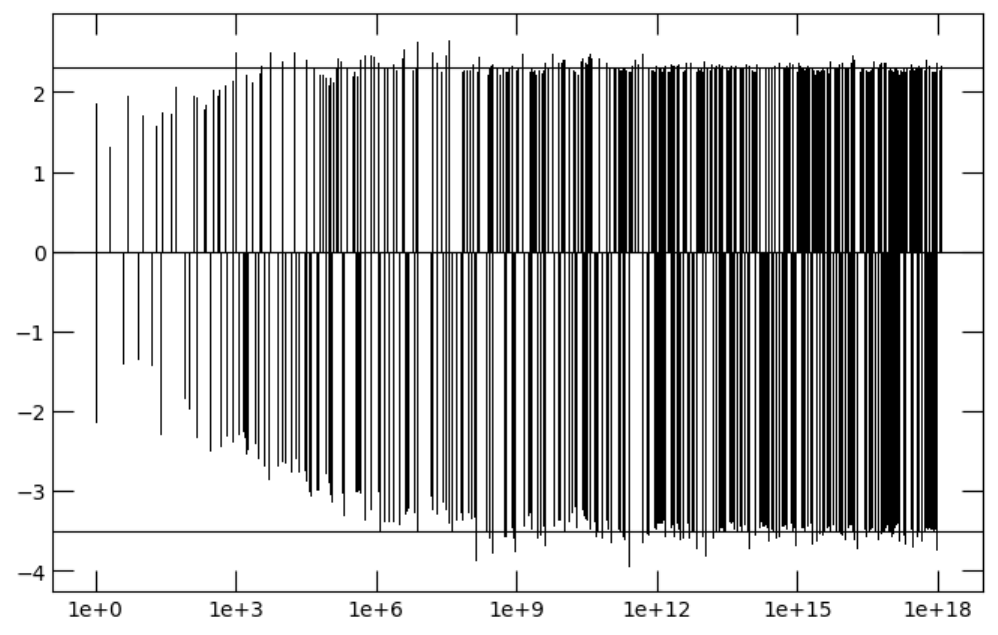}
\caption{$\emax(t)/(1+t^{1/4}(\log t)^{3/10})$, $\einf(t)/(1+t^{1/4}(\log t)^{3/10})$ 
            for $t\in T_\pm$}
\label{fig:E(t)-log-0.3}
\end{figure}

\section{The tendency of the points in $T_{\pm}$ to cluster}\label{sec:tendency}

We found that the sets $T_{\pm}$ are not uniformly distributed over the interval $[0,1.22\times 10^{18}]$ covered by our computations, 
but are instead concentrated in certain relatively narrow subintervals. 
More precisely, the following proposition holds.

\begin{prop}\label{prop:sqrt}
Let $S_{\max}$ and $S_{\inf}$ denote the sets of integers listed at the end of this section.
Then
\begin{align}
 T_+ \cap [0,1.22\times 10^{18}] & \subset\displaystyle\bigcup_{s\in S_{\max}}\Bigl((s-1)^2,s^2\Bigl],
\\ 
 T_- \cap [0,1.22\times 10^{18}] & \subset\displaystyle\bigcup_{s\in S_{\inf}}\Bigl((s-1)^2,s^2\Bigl].
\end{align}
\end{prop}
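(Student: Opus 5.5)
This proposition is a computational statement, so the proof will be an exhaustive verification. It rests on the computed lists of $T_+$ and $T_-$ produced by Subroutines \subroutineexplore\ and \subroutinecoarsegraining\ over $[0,1.22\times10^{18}]$. The plan has three parts. First, establish that the computed sets are exactly $T_\pm\cap[0,1.22\times10^{18}]$ (completeness and correctness). Second, define $S_{\max}$ and $S_{\inf}$ from those sets. Third, check the inclusions, which then hold essentially by construction.

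The first step is to justify that the detected points are all and only the elements of $T_\pm$.

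\begin{itemize}
\item \emph{Exactness of $A(t)$.} The trackers in \subroutinegauss\ work with the exact integer quantity $x^2+y^2-t_0-1$, so the counts $A(t)$ are exact integers. The consistency check $A(t_0)=$ {\tt LAST\_A} at every subinterval boundary gives an independent cross-validation of the accumulated counters via \eqref{eqn:summingcounters}.
\item \emph{No points lost to coarse-graining.} I will invoke the inequality
\[
E(t_1)-(t-t_1)\pi\le E(t)\le E(t_2)+(t_2-t)\pi \qquad (t_1\le t\le t_2),
\]
together with its two consequences: $T_+\cap(t_1,t_2]=\emptyset$ if $E(t_2)\le\emax(t_1)-(\Delta t-1)\pi$, and $T_-\cap(t_1,t_2]=\emptyset$ if $E(t_1)\ge\einf(t_1)+\Delta t\,\pi$. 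Hence every element of $T_\pm$ lies in an output ``$T_\pm$-able interval'', and those intervals are re-explored exhaustively by \subroutineexplore.
\item \emph{Parallel splitting and loose thresholds.} Supplying {\tt iniemax} $\le\emax(T_1)$ and {\tt inieinf} $\ge\einf(T_1)$ only enlarges the candidate set. The true $T_\pm$ is then recovered by post-processing the concatenated candidate lists in increasing $t$, recomputing the running maximum and infimum.
\end{itemize}

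Next, for each detected $t\in T_\pm$ I set $s=\lceil\sqrt t\,\rceil$, which is equivalent to $t\in((s-1)^2,s^2]$. Since the $t$ are integers, this is computed exactly by an integer square root. The sets $S_{\max}$ and $S_{\inf}$ are defined as the collections of these values of $s$, and the listed sets must coincide with (or contain) them. The inclusions follow immediately. The clustering phenomenon is what makes these lists short, but it is not needed for the proof.

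The main obstacle is the rigor of the floating-point comparisons in deciding whether $E(t)$ beats the current extreme. $E(t)=A(t)-\pi t$ is evaluated in REAL(16) arithmetic, with $t$ up to $10^{18}$ and $\pi t\approx 4\times10^{18}$. Quad precision gives roughly $34$ significant digits, which leaves an error of about $10^{-15}$, far below the gaps between successive records. Any near-tie would be rechecked by exact rational bounds on $\pi$.

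A secondary concern is hardware and software errors over years of jobs. The {\tt LAST\_A} consistency checks, together with re-running the localized intervals with the Appendix programs, provide the independent confirmation that every listed record, and no other, occurs in the stated intervals.
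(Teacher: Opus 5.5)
Your proposal matches the paper's proof. The paper says only that all of $T_\pm\cap[0,1.22\times10^{18}]$ was detected by the computation and that $S_{\max},S_{\inf}$ were read off from the detected points, so the inclusions hold by construction; you additionally spell out the completeness justifications the paper leaves implicit (exact integer tracking, the consistency check on $A(t_0)$, the coarse-graining bounds, and the loose-threshold post-processing).

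The one point needing care is $t=0$, which the table in Figure~\ref{fig:E(t)} lists in $T_+$: your equivalence $s=\lceil\sqrt t\,\rceil\iff t\in((s-1)^2,s^2]$ holds only for $t\ge1$, and no interval $((s-1)^2,s^2]$ contains $0$, so the inclusion as stated needs the convention $0\notin T_+$ (equivalently, the range $(0,1.22\times10^{18}]$).
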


\begin{proof}
We have detected all $T_\pm$ over $[0,\ 1.22\times10^{18}]$ by the computation.
Elements of $S_{\max}$ and $S_{\inf}$ are decided by those $T_+$ and $T_-$
in the results, respectively.
\end{proof}

\begin{rem}
However, we were unable to identify any underlying pattern in the sequences $S_{\max}$ and $S_{\inf}$.
\end{rem}

Actually as pointed out in \cite{Tromp1990}, those points in the interval $\Bigl((s-1)^2,s^2\Bigl]$
for each $s\in S_{\max}$ or $s\in S_{\inf}$ are concentrated in a narrower interval.
Figure~\ref{fig:E(t)-sqrt} shows the graphs of
\begin{equation*}
 \frac{\emax(t)}{1+t^{1/4}(\log t)^{1/3}}
 \quad\text{and}\quad
 \frac{\einf(t)}{1+t^{1/4}(\log t)^{1/3}}
\end{equation*}
as functions of $\sqrt{t}-\lfloor\sqrt{t}\rfloor$, $\sqrt{t}\in S_{\max}\cup S_{\inf}$, where $\lfloor x \rfloor$ is the greatest integer less than or equal to $x$.
As can be seen from Figure~\ref{fig:E(t)-sqrt}, the majority of the elements of $T_+$ lie in intervals of the form
\begin{equation}
 \bigcup_{s\in S_{\max}}((s-1+0.2)^2,(s-1+0.5)^2],
\end{equation}
while those of $T_-$ lie in intervals of the form
\begin{equation}
 \bigcup_{s\in S_{\inf}}((s-1+0.9)^2,s^2].
\end{equation}
\begin{figure}[htbp]
\centering
\includegraphics[width=0.6\linewidth]{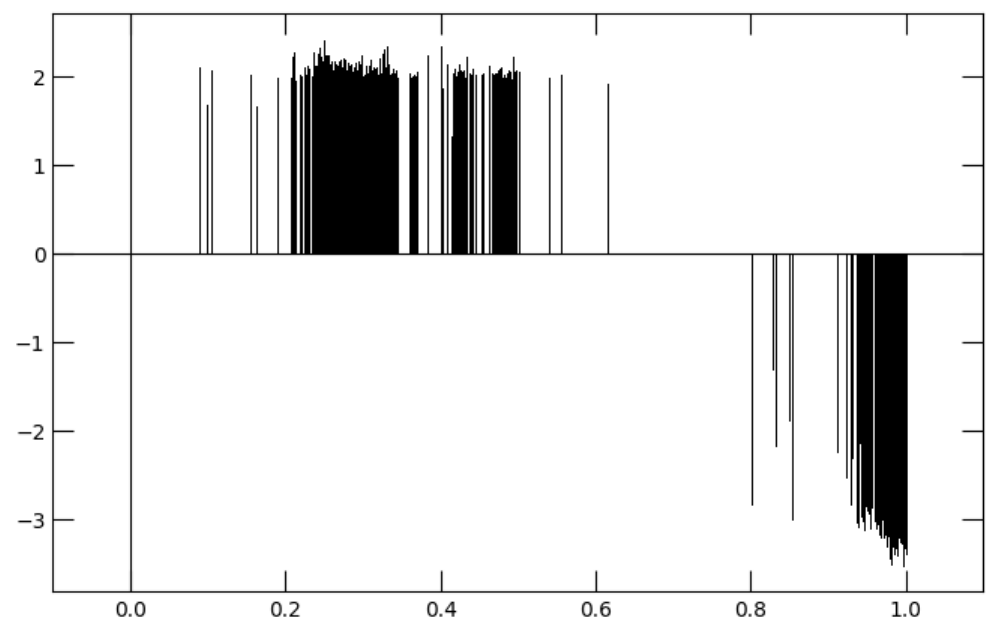}
\caption{$\emax(t)/(1+t^{1/4}(\log t)^{1/3})$, $\einf(t)/(1+t^{1/4}(\log t)^{1/3})$
            as functions of $\sqrt t -\lfloor \sqrt t \rfloor$}
\label{fig:E(t)-sqrt}
\end{figure}

More precisely, they are concentrated in much narrower intervals as follows. 
\begin{prop}\label{prop:sqrt 2}
Let $S_{\max}$ and $S_{\inf}$ be as in Proposition~\ref{prop:sqrt}.
Then there exist four functions $\kappa_+^i(s)$ on $S_{\max}$ and $\kappa_-^i(s)$ on $S_{\inf}$, $i=1,2$, with $0<\kappa_{\pm}^1(s)<\kappa_{\pm}^2(s)\le1$ such that 
\begin{align}
 T_+ \cap [0,1.22\times 10^{18}] & \subset \bigcup_{s\in S_{\max}}\Bigl( (s-1+\kappa_+^1(s))^2, (s-1+\kappa_+^2(s))^2 \Bigr],
\label{eqn:T+intervals}
\\
 T_- \cap [0,1.22\times 10^{18}] & \subset\bigcup_{s\in S_{\inf}}\Bigl( (s-1+\kappa_-^1(s))^2, (s-1+\kappa_-^2(s))^2 \Bigr].
\label{eqn:T-intervals}
\end{align}
\end{prop}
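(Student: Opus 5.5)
The statement is essentially a finite, computational refinement of Proposition~\ref{prop:sqrt}, so I would prove it the same way: directly from the complete list of elements of $T_\pm\cap[0,1.22\times10^{18}]$ produced by the computations of Section~\ref{sec:program}. The starting point is Proposition~\ref{prop:sqrt}. It already tells us that every $t\in T_+$ in the range lies in $((s-1)^2,s^2]$ for some $s\in S_{\max}$, and every $t\in T_-$ in the range lies in $((s-1)^2,s^2]$ for some $s\in S_{\inf}$. So it suffices to define, for each fixed $s$, numbers $\kappa^1_\pm(s)<\kappa^2_\pm(s)$ in $(0,1]$ whose squared interval still captures all detected points belonging to that $s$.

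Concretely, for $s\in S_{\max}$ let $T_+(s)=T_+\cap((s-1)^2,s^2]$. This is a finite, nonempty set, since $s$ was put into $S_{\max}$ precisely because some element of $T_+$ lies there. Put
\begin{equation*}
 m(s)=\min_{t\in T_+(s)}\bigl(\sqrt t-(s-1)\bigr),\qquad M(s)=\max_{t\in T_+(s)}\bigl(\sqrt t-(s-1)\bigr).
\end{equation*}
Because $(s-1)^2<t\le s^2$, we have $0<m(s)\le M(s)\le1$. I then define $\kappa_+^2(s)=M(s)$, and take $\kappa_+^1(s)$ to be any number with $0<\kappa^1_+(s)<m(s)$, for instance $\kappa_+^1(s)=m(s)-\delta$ for a small rational $\delta>0$. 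The interval $((s-1+\kappa^1_+)^2,(s-1+\kappa^2_+)^2]$ is half-open on the left, which is why $\kappa^1_+$ must lie strictly below $m(s)$; this choice also guarantees $\kappa^1_+<\kappa^2_+$ even when $T_+(s)$ is a single point.

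The same construction with $T_-(s)$ for $s\in S_{\inf}$ gives $\kappa^i_-$. One point of bookkeeping matters here: for $T_-$ the relevant $t$ is the jump point itself, since $\einf(t)=E(t-0)$ with $t$ an integer where $A$ jumps. That $t$ is exactly what \subroutineexplore\ records, so it is these $t$ that enter $m(s)$ and $M(s)$. The inclusions \eqref{eqn:T+intervals} and \eqref{eqn:T-intervals} then follow immediately by taking the union over $s$.

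The only genuine obstacle is the trustworthiness of the data: the argument needs \emph{all} elements of $T_\pm$ in the range. That rests on three things established earlier:
\begin{itemize}
\item the exactness of \subroutinegauss, with its integer arithmetic and the consistency check $A(t_0)=$\,{\tt LAST\_A};
\item the correctness of the coarse-graining exclusion inequalities, which guarantee that no point of $T_\pm$ lies outside the reported $T_\pm$-able intervals;
\item the post-processing of the "loose" parallel runs, which removes fake candidates.
\end{itemize}
I would cite these as given. The remaining work is purely computational: evaluating $\sqrt t$ with enough precision (quadruple precision, as in the listings) to compute $m(s)$ and $M(s)$ reliably and to choose the $\delta$ margins safely.
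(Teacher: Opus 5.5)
Your proposal is correct and is essentially the paper's argument. The paper's proof only says that the values $\kappa_\pm^i(s)$ can be read off from the complete computed list of $T_\pm\cap[0,1.22\times10^{18}]$. Your choice makes that step explicit: $\kappa_\pm^2(s)$ is the largest value of $\sqrt t-(s-1)$ over the detected points in $((s-1)^2,s^2]$, and $\kappa_\pm^1(s)$ is strictly below the smallest such value, which correctly handles the half-open left endpoint and the singleton case.
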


\begin{proof}
From our computations results $T_\pm$, we are able to decide those function values.
\end{proof}

\begin{rem}
(i) Narrowness is measured as
\begin{eqnarray*}
	&\kappa_+^2(s)-\kappa_+^1(s) \lessapprox2.4\times10^{-3}	
		\qquad&\mbox{if }s\in S_{\max},\ s\ge10^7	,	\\
	&\kappa_-^2(s)-\kappa_-^1(s) \lessapprox4.6\times10^{-3}		
		\qquad&\mbox{if }s\in S_{\inf},\ s\ge10^7.
\end{eqnarray*}
(ii) For example, for the largest element $s=1087048060\in S_{\max}$,
223 elements of $T_+$ are concentrated in $\kappa_+^1(s)=0.248$, 
$\kappa_+^2(s)=0.249$.
For the largest element $s=954266569\in S_{\inf}$,
1423 elements of $T_-$ are concentrated in $\kappa_-^1(s)=0.996$, 
$\kappa_-^2(s)=0.998$.
\end{rem}

Based on these observation, 
we made two all-in-one programs that detect all elements of $T_{\pm} \cap [0,1.22\times 10^{18}]$ (see Listing 5 and Listing 6 in Appendix).
These programs call Subroutine \subroutineexplore\ about intervals given
in the right hand side of (\ref{eqn:T+intervals}) and (\ref{eqn:T-intervals}).

\begin{defn}
We define $S_{\max}$ to be the set consisting of the 394 integers:
\begin{align*}
	&\{1,2,3,4,5,6,7,8,12,13,15,16,19,21,25,30,32,42,48,57,59,75,99,132,\\
	&\quad182,215,216,249,254,264,290,317,322,349,370,373,385,423,484,563,\\
	&\qquad\cdots\cdots,\\
	&\quad\qquad8874677,8917058,9704937,10320942,10552690,10778479,\\
	&\qquad\qquad\cdots\cdots,\\
	&\quad\qquad\qquad951504555,957864634,1046272275,1082245583,1087048060\}
\end{align*}
We also define $S_{\inf}$ to be the set consisting of the 310 integers:
\begin{align*}
	&\{1,2,3,4,5,9,10,12,17,22,25,29,34,38,39,41,43,51,55,63,72,89,99,\\
	&\quad106,122,123,140,147,149,169,178,195,201,229,241,284,292,309,326,\\
	&\qquad\cdots\cdots,\\
	&\quad\qquad9467773,9640452,11065910,12198321,12487831,\\
	&\qquad\qquad\cdots\cdots,\\
	&\quad\qquad\qquad865552004,890451476,917810515,921058948,954266569\}
\end{align*}
(For all elements of $S_{\max}$ and $S_{\inf}$, see Listing \ref{emaxallinone} and Listing \ref{einfallinone} in Appendix, respectively.)
\end{defn}

\section*{Acknowledgement}

This work was supported by JSPS KAKENHI Grant Numbers JP21K03304, JP26K06858 and JP24H00186.
The computation was performed using the following supercomputers: 
the supercomputer Fugaku provided by the RIKEN Center for Computational Science (Project ID: hp210224),
the FUJITSU Server PRIMERGY CX2550 M7 (Miyabi-C) at Joint Center for Advanced High Performance Computing (JCAHPC),
and the FUJITSU Supercomputer PRIMEHPC FX1000 (Wisteria-O/BDEC-01) at the Information Technology Center, The University of Tokyo.


%
\section*{Appendix}
\begin{lstlisting}[caption=Program to output elements of $T_+$ less than 1.22e+18 ({\tt emax-allinone.f90}), label=emaxallinone]
! to compile: $gfortran emax-allinone.f90 explore.f90 gauss.f90
!
MODULE MY_PARAMETERS
  IMPLICIT NONE
  INTEGER(4), PARAMETER:: INCSIZE = 10000000
  INTEGER(4), PARAMETER:: NSHIFT = 0
END MODULE MY_PARAMETERS
!
PROGRAM EMAX_ALLINONE
IMPLICIT NONE
INTEGER(8),PARAMETER::SMAX(125+49+61+73+86) = (/ INTEGER(8) :: &
 1,2,3,4,5,6,7,8,12,13,15,16,19,21,25,30,32,42,48,57,59,75,99,132,&
 182,215,216,249,254,264,290,317,322,349,370,373,385,423,484,563,&
 576,619,671,677,759,882,952,1034,1232,1237,1329,1541,1642,1877,&
 1889,1927,1956,2439,2656,2758,2766,3960,4422,4909,5544,5858,8231,&
 8614,9225,9356,9399,9938,9981,10068,10631,12009,12390,12593,15282,&
 15350,15591,16020,16036,16412,16702,17381,19766,21132,22168,22288,&
 24196,25511,25813,26054,26509,27704,28559,31124,33054,35825,35895,&
 36468,36798,43949,44813,46365,49058,51421,54403,54486,58292,58628,&
 60778,61172,62008,62046,62452,66896,68728,75665,76109,88093,93757,&
 96535,98933,& ! 125 s
 100949,115767,125472,125713,126552,131484,140158,143246,151546,&
 153967,155650,156324,172781,177711,186518,189094,200074,236517,&
 238016,242810,291942,312173,341986,347802,365081,381398,403896,&
 407052,412008,431076,448803,465391,466694,491426,496585,510518,&
 516833,533520,573550,576240,586776,632957,693903,697170,817323,&
 879728,912699,937433,961664,& ! 49 s
 1048240,1072261,1090856,1138857,1170063,1193902,1325800,1337538,&
 1382652,1390826,1443541,1469549,1513281,1601653,1645364,1671305,&
 1865140,1902384,1949273,2027042,2031448,2034782,2040704,2257920,&
 2309085,2617544,2703056,2840786,2982750,2983473,3082616,3159839,&
 3185570,3426661,3633991,3832688,4162665,4316815,4321221,4528725,&
 4827482,4932553,4988331,5510906,5519914,5709855,5915063,6005562,&
 6240661,6550677,6867357,7576407,7597250,7669336,7805040,7911139,&
 8017669,8363090,8874677,8917058,9704937,& ! 61 s
 10320942,10552690,10778479,10830817,11102220,11533985,11667185,&
 12389965,12966892,13287219,13661256,13845576,14273761,14418452,&
 15305574,15352076,15478477,16455756,16647015,16713635,16716274,&
 16920058,17991920,18060844,19916208,20015205,22569199,22628871,&
 23465932,24006081,25030239,25248346,26207117,27786737,31037611,&
 31409233,32120248,32340685,33092857,35229289,36040233,37159190,&
 38871541,38913198,39264478,40815277,42434273,45133991,47230671,&
 49397437,50100801,50220225,51512807,53624643,53955687,54714351,&
 55597284,56995667,58436926,59215998,59224428,60618434,64328988,&
 65233335,66374148,68846165,70016900,78241285,82545198,85285467,&
 87793746,89206305,92885279,& ! 73 s
 100156181,105522101,110724886,116525007,116867205,122850703,&
 128658021,147619479,148034618,148954544,150659324,152661417,&
 160763037,167626738,168160562,172268824,177294864,179678067,&
 181759366,187625596,191723979,194143130,203114702,205995390,&
 230212685,235280508,236243033,244188193,269969949,276197017,&
 301276495,312225241,322387373,336771615,341057503,344825803,&
 347471544,355204966,357151795,373129150,391163605,392312915,&
 396745187,401597124,417481473,421786470,423778503,425167643,&
 426333117,451050418,451852895,456970323,467011952,468302207,&
 496764787,506958444,507232938,535432138,564607540,578849975,&
 585238919,597800970,618421789,622507778,638807341,656498944,&
 669422659,711526674,716949896,718085898,730251044,736328906,&
 742425935,753411046,813706018,869454130,889413934,898447215,&
 904014319,924091638,944263039,951504555,957864634,1046272275,&
 1082245583,1087048060 /) ! 86 S
REAL(8),PARAMETER::KAPPA1_TABLE(73+86) = (/ REAL(8) :: &
 0.34,0.232,0.335,0.266,0.421,0.436,0.308,0.258,0.258,0.253,&
 0.261,0.238,0.256,0.254,0.264,0.451,0.415,0.248,0.247,0.259,&
 0.241,0.313,0.249,0.265,0.252,0.263,0.25,0.332,0.274,0.24,&
 0.264,0.248,0.265,0.243,0.226,0.271,0.249,0.258,0.328,0.258,&
 0.253,0.277,0.226,0.272,0.326,0.266,0.253,0.331,0.266,0.244,&
 0.279,0.263,0.249,0.25,0.265,0.243,0.467,0.28,0.255,0.246,&
 0.258,0.251,0.285,0.474,0.323,0.295,0.227,0.258,0.277,0.424,&
 0.273,0.323,0.25,& ! for 73 s
 0.265,0.245,0.265,0.342,0.278,0.262,0.331,0.256,0.298,0.255,&
 0.268,0.263,0.296,0.264,0.288,0.275,0.296,0.317,0.249,0.244,&
 0.263,0.256,0.272,0.249,0.243,0.368,0.249,0.261,0.27,0.249,&
 0.288,0.251,0.275,0.25,0.265,0.286,0.423,0.251,0.252,0.317,&
 0.259,0.489,0.271,0.332,0.36,0.292,0.249,0.426,0.313,0.266,&
 0.30,0.25,0.25,0.305,0.423,0.431,0.341,0.276,0.288,0.286,&
 0.265,0.25,0.279,0.243,0.249,0.287,0.248,0.252,0.268,0.418,&
 0.22,0.467,0.288,0.282,0.252,0.254,0.429,0.25,0.274,0.249,&
 0.335,0.261,0.278,0.31,0.256,0.247 /) ! for 86 s
INTEGER(8):: FIRST_T, LAST_T, S
INTEGER:: I, NEXS
REAL(8):: EMAX=0.0, EINF=0.0, KAPPA1, KAPPA2
REAL:: TIME0, TIME1
!
CALL CPU_TIME(TIME0)
PRINT *, "PROGRAM EMAX_ALLINONE"
DO I = 1, SIZE(SMAX)
   S = SMAX(I)
   IF (I>125+49+61) THEN ! If S>10^7
      KAPPA1 = KAPPA1_TABLE(I-125-49-61) ; KAPPA2 = KAPPA1+0.004
   ELSE IF (S < 1000000) THEN ! If S<10^6
      KAPPA1 = 0.08 ; KAPPA2 = 1.0
   ELSE ! If 10^6<S<10^7
      KAPPA1 = 0.23 ; KAPPA2 = 0.51
   END IF
   FIRST_T = (REAL(S-1,8)+KAPPA1) ** 2.0_8 ! REDUCE BY KAPPA FUNCTIONS
   LAST_T = (REAL(S-1,8)+KAPPA2) ** 2.0_8
   PRINT *,S,":",FIRST_T,LAST_T
   
   CALL EXPLORE(FIRST_T, LAST_T, EMAX, EINF, .TRUE., .FALSE., NEXS)
   IF (NEXS <= 0) THEN
      PRINT *,"ERROR: INVALID INTERVAL"
      STOP
   END IF
END DO

CALL CPU_TIME(TIME1)
PRINT *,"TOTALLY, CPU TIME: ", TIME1-TIME0, " SECONDS."
END PROGRAM EMAX_ALLINONE
\end{lstlisting}
\begin{lstlisting}[caption=Program to output elements of $T_-$ less than
1.22e+18 ({\tt einf-allinone.f90}), label=einfallinone]
PROGRAM EINF_ALLINONE
IMPLICIT NONE
INTEGER(8),PARAMETER::SINF(99+36+52+56+67) = (/ INTEGER(8) :: &
 1,2,3,4,5,9,10,12,17,22,25,29,34,38,39,41,43,51,55,63,72,89,99,&
 106,122,123,140,147,149,169,178,195,201,229,241,284,292,309,326,&
 338,432,449,601,627,647,678,758,881,1059,1072,1202,1364,1516,1762,&
 2028,2127,2168,2520,2728,3783,3889,3957,4401,5164,5227,5473,5922,&
 6371,7129,7887,8399,9582,10217,11018,11069,11535,14775,15936,&
 16139,17660,23273,24637,27691,29055,30559,37447,41971,43352,45182,&
 52311,52654,55848,61321,63897,78981,79592,87153,88843,91571,& !99 s
 103309,103618,112116,123253,124926,133093,135872,158247,158556,&
 166308,167515,172679,177906,192152,211804,217861,228824,253787,&
 257879,292284,298020,301697,318442,323606,389333,390540,403507,&
 424187,447460,492642,696701,701853,768787,799346,919714,956398,&!36
 1016355,1060559,1068205,1107926,1151278,1210757,1391362,1428495,&
 1476562,1509289,1649241,1694423,1770816,1861180,2113137,2141434,&
 2163792,2396276,2463210,2656434,3018851,3293115,3335912,3343664,&
 4000784,4238741,4602365,4820226,4945135,4966993,5038087,5114031,&
 5156125,5389867,5907455,6035266,6153545,6206424,6359966,6500026,&
 6635017,6687328,7065575,7812762,7962168,8027586,8502877,9091542,&
 9184050,9378930,9467773,9640452,& ! 52 s
 11065910,12198321,12487831,13078017,13260710,13756114,13916017,&
 14035313,14321337,14354933,14745524,15141986,15629172,16905173,&
 17530010,18470472,19907016,20044218,21574523,22295040,23020475,&
 24095763,24341243,26741937,27162778,27486087,28732135,29585673,&
 30036776,34551662,35070170,35860709,36997212,40700408,41066311,&
 44315367,45419604,45575724,49818731,53466663,57462906,58949482,&
 61182987,65833122,68206769,69725124,74652925,80134380,81264821,&
 82678855,86525810,89483352,93985136,95122711,95891498,99956488,&!56
 109335418,114725285,116745453,121089939,125564807,128068484,&
 136122088,136485712,166564184,171305021,184550664,191119954,&
 193881684,198867460,209481644,210423110,211195866,214901189,&
 227729214,244531730,254566611,257620833,268192737,277999497,&
 284741017,290330030,293217427,296923957,305193592,315229502,&
 318637063,325719849,328834363,346613064,360568201,362035753,&
 366353323,373976953,377475081,378143012,383610276,387234643,&
 425164671,444386596,511786335,512205049,524607231,528154645,&
 595386869,628457551,658947417,722899886,725557078,730522309,&
 754651613,761669295,776980317,816586053,817487052,829366931,&
 836797960,844247704,865552004,890451476,917810515,921058948,&
 954266569 /) ! 67 s
REAL(8),PARAMETER::KAPPA1_TABLE(56+67) = (/ REAL(8) ::&
 0.995,0.996,0.991,0.987,0.997,0.991,0.996,0.997,0.97,0.997,&
 0.978,0.978,0.981,0.997,0.993,0.993,0.986,0.995,0.962,0.995,&
 0.997,0.98,0.994,0.996,0.968,0.984,0.987,0.99,0.99,0.995,&
 0.944,0.975,0.991,0.995,0.996,0.97,0.96,0.992,0.965,0.997,&
 0.996,0.993,0.995,0.997,0.992,0.997,0.988,0.988,0.992,0.951,&
 0.995,0.988,0.996,0.997,0.994,0.997,& !! for 56 s
 0.997,0.997,0.997,0.995,0.971,0.997,0.994,0.996,0.977,0.995,&
 0.997,0.997,0.987,0.963,0.986,0.994,0.987,0.959,0.996,0.995,&
 0.995,0.992,0.995,0.996,0.988,0.996,0.997,0.997,0.99,0.997,&
 0.987,0.993,0.994,0.994,0.987,0.996,0.99,0.997,0.991,0.935,&
 0.991,0.997,0.996,0.993,0.99,0.994,0.995,0.996,0.997,0.992,&
 0.988,0.979,0.994,0.994,0.997,0.974,0.996,0.996,0.991,0.994,&
 0.996,0.994,0.995,0.988,0.995,0.979,0.994 /) ! for 67 s
INTEGER(8):: FIRST_T, LAST_T, S
INTEGER:: I,NEXS
REAL(8):: EMAX=0.0, EINF=0.0, KAPPA1, KAPPA2
REAL:: TIME0, TIME1
!
CALL CPU_TIME(TIME0)
PRINT *, "PROGRAM EINF_ALLINONE"
DO I = 1, SIZE(SINF)
   S = SINF(I)
   IF (I>99+36+52) THEN ! IF S>10-7
      KAPPA1 = KAPPA1_TABLE(I-99-36-52)
      KAPPA2 = MIN(KAPPA1+0.005, 1.0)
   ELSEIF (S <= 999999) THEN
      KAPPA1 = 0.8_8 ; KAPPA2 = 1.0_8
   ELSE IF (S <= 9999999) THEN
      KAPPA1 = 0.85_8 ; KAPPA2 = 1.0_8
   END IF
   FIRST_T = (REAL(S-1,8)+KAPPA1) ** 2.0_8 ! REDUCE BY KAPPA FUNCTIONS
   LAST_T = (REAL(S-1,8)+KAPPA2) ** 2.0_8
   PRINT *,S,":",FIRST_T,LAST_T

   CALL EXPLORE(FIRST_T, LAST_T, EMAX, EINF, .FALSE., .TRUE., NEXS) ! EINF ONLY
   IF (NEXS <= 0) THEN
      PRINT *,"ERROR: INVALID INTERVAL"
      STOP
   END IF
END DO
  
CALL CPU_TIME(TIME1)
PRINT *,"TOTALLY, CPU TIME: ", TIME1-TIME0, " SECONDS."
END PROGRAM EINF_ALLINONE
\end{lstlisting}

\section*{Electric Appendix}
\begin{itemize}
\item Job data table
\item Programs (the same as Listing 1 to Listing 6)
\end{itemize}
(Before publication, these are available in
{\tt http://fjord.} {\tt sci.ibaraki.ac.jp/gausscp/, user=pcf, pwd=8700}.)

\vskip 20mm

\small

\noindent
{Satoshi Yamaguchi}\\
{Graduate School of Economics, Hitotsubashi University, 2-1 Naka, Kunitachi, Tokyo 186-8601, Japan}\\
{satoshiyamaguchi.1998@gmail.com}
\\[1ex]
{Shoichi Fujima}\\
{Department of Mathematics, Ibaraki University, Mito, Ibaraki 310-8512, Japan}\\
{shoichi.fujima.sci@vc.ibaraki.ac.jp}
\\[1ex]
{Shigehiko Kuratsubo}\\
{Department of Mathematical Sciences, Hirosaki University, Hirosaki 036-8561, Japan}\\
{kuratubo@hirosaki-u.ac.jp}
\\[1ex]
{Eiichi Nakai}\\
{Department of Mathematics, Ibaraki University, Mito, Ibaraki 310-8512, Japan}\\
{eiichi.nakai.math@vc.ibaraki.ac.jp}
\\[1ex]
{Tsuyoshi Yoneda}\\
{Graduate School of Economics, Hitotsubashi University, 2-1 Naka, Kunitachi, Tokyo 186-8601, Japan}\\
{t.yoneda@r.hit-u.ac.jp}

\end{document}